\begin{document}

\title[$L$-homology on ball complexes]{$L$-homology on ball complexes \\ and products}


\author{Spiros Adams-Florou}
\address{School of Maths and Stats \\ University of Glasgow \\ 15 University Gardens \\ Glasgow G12 8QW \\ United Kingdom}
\email{Spiros.Adams-Florou@glasgow.ac.uk}
\thanks{The first author would like to thank the University of Glasgow where most of his work for the present article was done. He would also like to thank the Max Planck Institute for Mathematics in Bonn where the first part of this work was done.}

\author{Tibor Macko}
\address{Mathematical Institute \\ Slovak Academy of Sciences \\ \v Stef\'anikova 49 \\ SK-814 73 Bratislava \\ Slovakia}
\email{macko@mat.savba.sk}
\thanks{The second author would like to thank the University of Bonn where most of his work on the present article was done.}

\maketitle

\begin{center}
\today
\end{center}

\begin{abstract}
We construct homology theories with coefficients in $L$-spectra on the category of ball complexes and we define products in this setting. We also obtain signatures of geometric situations in these homology groups and prove product formulae which we hope will clarify products used in the theory of the total surgery obstruction.
\end{abstract}

\tableofcontents


\section{Introduction} \label{sec:intro}


Let $X$ be an $n$-dimensional geometric Poincar\'e complex, that means a finite $CW$-complex satisfying Poincar\'e duality. A fundamental question in topology of manifolds is to decide whether $X$ is homotopy equivalent to an $n$-dimensional topological manifold. Ranicki developed a systematic general theory for answering this question resulting in the definition of the {\it total surgery obstruction} $s(X) \in \SS_{n} (X) = L_{n-1} (\Lambda^{c}_{\ast} (X))$ which if $n \geq 5$ is zero if and only if the answer is yes, see \cite{Ranicki(1979)}, \cite{Ranicki(1992)}, \cite{Kuehl-Macko-Mole(2012)}. Here $\SS_{n} (X) = L_{n-1} (\Lambda^{c}_{\ast} (X))$ is the $L$-group of the algebraic bordism category of quadratic chain complexes over $X$ which are locally Poincar\'e and globally contractible and satisfy certain connectivity assumptions which we suppress. The theory can also be used in the relative setting when it answers the question of whether two manifold structures $f_{0} \co M_{0} \xra{\simeq} X$ and $f_{1} \co M_{1} \xra{\simeq} X$ are homeomorphic over $X$, that means whether they define the same element in the topological structure set $\sS^{\TOP} (X)$ in the sense of surgery theory. In fact the theory results in a bijection $s \co \sS^{\TOP} (X) \ra \SS_{n+1} (X) = L_{n} (\Lambda^{c}_{\ast} (X))$, see again \cite{Ranicki(1979)}, \cite{Ranicki(1992)}, \cite{Kuehl-Macko-Mole(2012)}.

In the process of developing the theory the whole setup of geometric surgery and algebraic surgery is used. In particular the set of {\it normal invariants} (alias {\it degree one normal maps}), denoted here $\sN^{\TOP} (X)$, is used and a bijection is obtained
\begin{equation} \label{eqn:quad-signature-over-X}
	\qsign_{X} \co \sN^{\TOP} (X) \ra H_{n} (X ; \bL_{\bullet} \langle 1 \rangle) \cong L_{n} (\Lambda_{\ast} (X)).
\end{equation}
Here $H_{n} (X ; \bL_{\bullet} \langle 1 \rangle) \cong L_{n} (\Lambda_{\ast} (X))$ is the homology of $X$ with respect to the $1$-connective cover of the quadratic $L$-theory spectrum $\bL_{\bullet}$ \cite[Chapter 15]{Ranicki(1992)} and as the isomorphism suggests it can be obtained as the $L$-group of the algebraic bordism category of quadratic chain complexes over $X$ which are locally Poincar\'e and again satisfy certain connectivity conditions which we suppress. The map \eqref{eqn:quad-signature-over-X} is called {\it the quadratic signature over} $X$, it is obtained by refining the quadratic construction of \cite{Ranicki-II-(1980)}.

In fact the quadratic signature over $X$ provides us in the case that $X$ is an $n$-dimensional manifold such that $\pi = \pi_{1} (X)$ and $n \geq 5$ with an identification of the geometric and algebraic surgery exact sequences:
\[
\xymatrix@C=7mm{
	\cdots \ar[r] & L_{n+1} (\ZZ\pi) \ar[r] \ar[d]_{=} & \sS^{\TOP} (X) \ar[r] \ar[d]_{\qsign_{X}} & \sN^{\TOP} (X) \ar[r]^(0.55){\qsign_{\pi}} \ar[d]_{\qsign_{X}} & L_{n} (\ZZ\pi) \ar[d]_{=} \\
	\cdots \ar[r] & L_{n+1} (\ZZ\pi) \ar[r] & \SS_{n+1} (X) \ar[r] & H_{n} (X ; \bL_{\bullet} \langle 1 \rangle) \ar[r]_(0.6){\textup{asmb}} & L_{n} (\ZZ\pi) \ar[r] & \SS_{n} (X).
}
\]
Here $\qsign_{\pi}$ is the surgery obstruction map obtained via the above mentioned quadratic construction of \cite{Ranicki-II-(1980)} and $\textup{asmb}$ is the assembly map, see \cite{Ranicki(1992)}, \cite{Kuehl-Macko-Mole(2012)} for more details.

However, there are some deficiencies in the setup from \cite{Ranicki(1979)}, \cite{Ranicki(1992)}, \cite{Kuehl-Macko-Mole(2012)} when we are interested in its behavior with respect to products. Firstly recall that there is a well developed theory for products in $L$-groups of rings in \cite[Section 8]{Ranicki-I-(1980)} and \cite[Section 8]{Ranicki-II-(1980)} which includes product formulae for geometric situations. However, when considering the terms in the sequences above, we observe that the setup relies on choosing a simplicial complex model for $X$ and working with categories of modules and chain complexes over it. In \cite[Appendix B]{Ranicki(1992)} it is described how to use a variant of a simplicial diagonal approximation to obtain certain products. However, from our point of view that description is not sufficient since we could not obtain product formulae for the map \eqref{eqn:quad-signature-over-X}. More similar products appear in \cite[Chapter 21]{Ranicki(1992)} and in the Appendix to \cite{Lueck-Ranicki(1992)} again referring to the simplicial diagonal approximation.

Recently Laures and McClure \cite{Laures-McClure(2009)} used ball complexes instead of simplicial complexes to construct $L$-theory spectra with good multiplicative properties. Moreover, as a byproduct, they constructed a cohomology theory
\begin{equation} \label{eqn:cohlgy-with-L-coefficients}
	X \mapsto H^{n} (X ; \bL_{\bullet}) \cong L_{n} (\Lambda^{\ast} (X))
\end{equation}  
with input the category of ball complexes and where $\Lambda^{\ast} (X)$ is a certain algebraic bordism category (different from $\Lambda_{\ast} (X)$), see our Section \ref{sec:homology-theory} and Theorem 16.1 and Remark 16.2 in \cite{Laures-McClure(2009)} for more details. On the other hand, they did not construct a homology theory and they also did not consider the quadratic signature over $X$ map \eqref{eqn:quad-signature-over-X}. It is our aim in this paper to construct such a homology theory and signature map, to construct products of the shape stated as \eqref{eqn:products-on-L-groups-over-X} and \eqref{eqn:products-on-L-groups-over-X-relative-case}, and to obtain product formulae, stated as \eqref{eqn:product-formula-for-signatures-over-X} and \eqref{eqn:product-formula-for-signatures-over-X-relative-case} below. A special case of one of the product formulae gives that for a closed $k$-dimensional manifold $F$ and a closed $n$-dimensional manifold $X$ we have a commutative diagram
\begin{equation} \label{eqn:products-commute-with-quad-sign-over-X}
	\begin{split}	
		\xymatrix{
		\sN (X) \ar[d]_{\qsign_{X}} \ar[rr]^{\id_{F} \times \blank} & & \sN (X \times F) \ar[d]^{\qsign_{X \times F}} \\
    	H_{n} (X ; \bL_{\bullet} \langle 1 \rangle) \ar[rr]_(0.45){\ssign_{F} (F) \otimes \blank} & & H_{n+k} (X \times F ; \bL_{\bullet} \langle 1 \rangle)).
		}
	\end{split}
\end{equation}
Here $\ssign_{F} (F) \in H_{k} (F ; \bL^{\bullet} \langle 0 \rangle)$ is {\it the symmetric signature over} $F$ of $F$, see \cite{Ranicki(1992)}, \cite{Kuehl-Macko-Mole(2012)}. It refines the symmetric signature over $\pi_{1} (F)$ constructed in \cite{Ranicki-II-(1980)} as  $\ssign_{\pi_{1} (F)} (F) \in L^{k} (\ZZ\pi_{1} (F))$. There exists also a relative version when $F$ is a manifold with boundary. Applying the above diagram in the case $(F,\del F) = (D^{k},S^{k-1})$ yields the suspension isomorphism in the bottom row and hence we obtain a geometric description of such a suspension.


As noted above, the formulae we obtain are related to products mentioned in \cite[Appendix]{Lueck-Ranicki(1992)} and \cite[Chapter 21]{Ranicki(1992)}. In particular Proposition 21.1 in \cite[Chapter 21]{Ranicki(1992)} states multiplicativity for the visible symmetric signature of a Poincar\'e complex. The visible symmetric signature is important since the total surgery obstruction is defined as its boundary in the sense of algebraic surgery. Hence it would shed some light on the multiplicative properties of the total surgery obstruction itself. In principle it should be possible to give an easy proof of the formula from Proposition 21.1 in \cite[Chapter 21]{Ranicki(1992)} in our setup, but for such a proof we would also need to formulate the whole algebraic surgery exact sequence for $X$ a ball complex and prove its main properties such as the identification above. This means in particular reproving the algebraic $\pi-\pi$-theorem of \cite[Chapter 10]{Ranicki(1992)} which we hope to work out in a future work. Nevertheless we hope that already the formulae obtained here will have a direct application in a future work along the lines of \cite[Section 15]{Kuehl-Macko-Mole(2012)} where we aim at simplifying the proof of the main theorem about the total surgery obstruction from \cite{Ranicki(1979)}, \cite{Kuehl-Macko-Mole(2012)}, since the proof uses products on homology and cohomology.	


The present work is structured as follows. In Section 2 we review facts about ball complexes, mainly using \cite{McCrory(1975)}. In Sections 3, 4, and 5 we define additive categories $\AA_{\ast} (X)$ and $\AA^{\ast} (X)$ for a ball complex and we show that they possess chain duality in the sense of \cite[Chapter 1]{Ranicki(1979)}. We find both the proof in \cite[Chapter 5]{Ranicki(1979)} and in \cite{Weiss(1992)} of this fact in the case $X$ is a simplicial complex or a finite $\Delta$-set very dense and hence we give all the details and put the chain duality in a more general context. Besides the product formulae this is meant to be another contribution of our paper. Section 6 contains a brief review of $L$-theory for additive categories with chain duality. Once the chain duality is set up, we observe in Section 7 that the proof from \cite{Weiss(1992)} that the assignment $X \mapsto L_{n} (\Lambda_{\ast} (X))$ defines a homology theory essentially works for ball complexes as well. Finally in Sections 8 and 9 we use the chain level version of the products on $L$-groups of rings from \cite{Ranicki-II-(1980)} and prove the desired product formulae for $L$-homology.


\section{Ball complexes and related concepts} \label{sec:ball-cplxs}


We start with a definition and collect some properties of ball complexes and related concepts. The main sources are \cite{McCrory(1975)}, \cite{Buoncristiano-Rourke-Sanderson(1976)}, \cite{Laures-McClure(2009)}.

\begin{definition} \cite{Buoncristiano-Rourke-Sanderson(1976)} \cite{Laures-McClure(2009)} \label{defn:ball-complex} Let $n$ be a natural number, let $K$ be a finite collection of PL balls in $\RR^{n}$ and write $|K|$ for the union $\bigcup_{\sigma \in K} \sigma$. We say that $K$ is a {\it ball complex} if the interiors of the balls of $K$ are disjoint and the boundary of each ball of $K$ is a union of balls of $K$.
\end{definition}

A ball complex is a regular cell complex in the sense of \cite[Section II.6]{Whitehead(1978)}. However, to distinguish these from cell complexes in the next definition we will refer to them as regular $CW$-complexes. In order to have good dual cell decompositions we need a refinement of this concept. Recall that a {\it polyhedron} $X$ is a topological space with a maximal family of triangulations \cite{McCrory(1975)}. For example, the geometric realisation $|K|$ of a simplicial complex $K$ gives such a polyhedron, so that $K$ is one of the triangulations in the family. A {\it cone} on a topological space $X$ is $\textup{cone} (X) := X \times [0,1]/\sim$ where $(x,1) \sim (y,1)$ for all $x$, $y \in X$. The point $c := [(x,1)]$ is called the {\it cone point}.

\begin{definition} \cite{McCrory(1975)} \label{defn:cone-complex}
	\
	
	A {\it cone complex} $\sC$ on a polyhedron $X$ is a locally finite covering of $X$ by compact subpolyhedra, together with a subpolyhedron $\del \alpha$ for each $\alpha \in \sC$, such that
	\begin{enumerate}
		\item for each $\alpha \in \sC$ we have that $\del \alpha$ is a union of elements in $\sC$
		\item for $\alpha \neq \beta$ the interiors $\mathring{\alpha} = (\alpha \smallsetminus \del \alpha)$ and $\mathring{\beta} = (\beta \smallsetminus \del \beta)$ satisfy $\mathring{\alpha} \cap \mathring{\beta} = \emptyset$
		\item for each $\alpha \in \sC$ there is a PL-homeomorphism $\alpha \cong \textup{cone} (\del \alpha)$ rel $\del \alpha$.
	\end{enumerate}
	
	A {\it cell complex} is a cone complex such that each cone $\alpha \in \sC$ is a ball with $\del \alpha$ its boundary sphere.
	
	A {\it structure} for a cone $\alpha \in \sC$ is a choice of a homeomorphism $f_{\alpha} \co \alpha \ra \textup{cone} (\del \alpha)$. The {\it cone point} of $\alpha$ in this structure is the point $f^{-1}_{\alpha} (c)$.
	
	A {\it structured cone complex} is a cone complex with a choice of structure for each cone.
	
	A {\it structured cell complex} is a cell complex which is structured as a cone complex.
\end{definition}

Dual cell decompositions for ball complexes are described in \cite[page 274]{McCrory(1975)}. They are defined by essentially the same procedure as for simplicial complexes. Our principal motivation for working with structured cell (or ball) complexes rather than simplicial complexes is that structured cell (or ball) complexes behave better with respect to products.

Let $X$ be a ball complex. For each ball $\sigma \in X$ choose a point in its interior. Alternatively assume that $X$ is a structured cell complex, then a choice of such a point is given by taking the cone point for each ball. We obtain the canonical derived subdivision $X'$ which is a simplicial complex with $l$-simplices given by sequences $\sigma_{0} < \cdots < \sigma_{l}$ where $\sigma_{i} \in X$, see \cite[Proposition 2.1]{McCrory(1975)}. The dual cell $D(\sigma,X)$ is a subcomplex of $X'$ which consists of the simplices in $X'$ such that $\sigma \leq \sigma_{0}$. The above construction depends on choices of points if $X$ is just a ball complex, or if $X$ is a structured cell complex there are no choices involved, which is from our point of view the main advantage of structured cell complexes. We also note that the space of these choices is contractible which means that from the homotopy theory point of view there is very little difference between the respective categories.

Let $X$ and $F$ be structured cell complexes and let $\sigma \in X$ be an $n$-ball and let $\tau \in F$ be a $k$-ball. Then we have an $(n+k)$-ball $\sigma \times \tau \in X \times F$ and this gives a structured cell complex structure on $X \times F$. In addition we see from the definitions for dual cells that (see also \cite[Proposition 2.3]{McCrory(1975)})
\begin{equation} \label{eqn:dual-cell-of-product}
	D (\sigma \times \tau,X \times F) = D (\sigma,X) \times D (\tau,F).
\end{equation}

We also need some categorical language. We will denote the category of structured cell complexes and inclusions by $\strcellcat$, the category of ball complexes and inclusions by $\ballcat$, the category of regular $CW$-complexes and inclusions by $\regCWcplxcat$ (remember that these are called regular cell complexes in \cite[Section II.6]{Whitehead(1978)}) so that we have forgetful functors
\[
\strcellcat \ra \ballcat \ra \regCWcplxcat.
\]

We finish with a definition and a proposition which we will need later.

\begin{defn}\label{defn:starsAndBallsInBetween}
Let $X$ be a ball complex.
\begin{enumerate}
 \item Let $\mathrm{st}(\sigma)$ denote the open star of a ball $\sigma$, defined as
 \[ \mathrm{st}(\sigma)= \bigcup_{\sigma\leqslant \tau}\mathring{\tau}.\]
 \item For all inclusions of balls $\rho\leqslant\sigma$, let $[\rho:\sigma]$ denote the union of interiors of all balls containing $\rho$ and contained in $\sigma$:
 \[[\rho:\sigma]= \bigcup_{\rho\leqslant\tau\leqslant\sigma}\mathring{\tau} = \mathrm{st}(\rho)\cap \sigma.\]
\end{enumerate}
\end{defn}

\begin{prop}\label{prop:cellChainsOfRhoColonSigmaCtble}
For all strict inclusions of balls $\rho < \sigma$ in $X$ the cellular chain complex $C_*([\rho:\sigma];\Z)$ and cochain complex $C^{-*}([\rho:\sigma];\Z)$ are chain contractible.
\end{prop}
\begin{proof}
By definition $[\rho:\sigma]=\mathrm{st}(\rho)\cap \sigma$. When $\rho<\sigma$ in a regular cell complex we have that $\partial\sigma \setminus \mathrm{st}(\rho)$ is a closed $(|\sigma|-1)$-ball. Hence
\[C_*([\rho:\sigma];\Z) = C_*(\sigma,\partial\sigma\setminus \mathrm{st}(\rho);\Z) \cong C_*(D^{|\sigma|},D^{|\sigma|-1};\Z)\simeq 0.\]
Similarly for $C^{-*}([\rho:\sigma];\Z)$.
\end{proof}

\section{Chain duality} \label{sec:chain-duality}
In defining $L$-theory of a ring $R$ an involution is required. The involution allows one to convert right $R$-modules into left $R$-modules and hence tensor together two right $R$-modules. The $L$-theory of $R$ depends on the choice of involution. When generalising to $L$-theory of an additive category $\A$ one could also use an involution but instead a weaker structure, that of a chain duality on $\A$, is used instead. A chain duality on $\A$ determines an involution on the derived category of chain complexes in $\A$ and chain homotopy classes of chain maps, allowing for the definition of an $n$-dimensional algebraic Poincar\'{e} complex in $\A$ as a finite chain complex which is chain equivalent to its $n$-dual. This weakening is crucial for local duality in later sections.

Throughout the paper $\A$ denotes an additive category and $\BB(\A)$ denotes the additive category of bounded chain complexes in $\A$ together with chain maps. Let $\iota_\A: \A \to \BB(\A)$ denote inclusion into degree $0$ and let $\mathcal{S}_\mathcal{C}:\mathcal{C}\times\mathcal{C} \to \mathcal{C}\times \mathcal{C}$ denote the functor that switches the two factors for any category $\mathcal{C}$. Let $S^n:\BB(\A)\to \BB(\A)$ and $\Sigma^n:\BB(\A)\to \BB(\A)$ be respectively the unsigned and signed suspension functors as in \cite[pp. $25$-$26$]{Ranicki(1992)}.

The total complex of a double chain complex can be used to extend a contravariant (resp. covariant) functor $T_\A:\A \to \BB(\A)$ to a contravariant (resp. covariant) functor $T_\BB: \BB(\A)\to \BB(\A)$ such that $T_\A = T_\BB\circ \iota_\A$. The contravariant case is explained in detail in \cite[p. $26$]{Ranicki(1992)} and the covariant case is obtained by replacing $C_{-p}$ with $C_p$ in all the formulae for the contravariant case.

\begin{prop}\label{prop:extende}
A natural transformation $e_\A: \sF_\A \Rightarrow \sG_\A: \A\to \BB(\A)$ of contravariant (resp. covariant) functors extends to a natural transformation $e_\BB: \sF_\BB \Rightarrow \sG_\BB: \BB(\A) \to \BB(\A)$ of their extensions.
\end{prop}
\begin{proof}
For contravariant functors setting 
\[e_\BB(C)_n = \sum_{p+q=n}e_\A(C_{-p})_q: \sum_{p+q=n}\sF_\A(C_{-p})_q \to \sum_{p+q=n}\sG_\A(C_{-p})_q\]
defines the required natural transformation and for covariant functors replace all instances of $C_{-p}$ with $C_p$ in the above.
\end{proof}

\begin{defn}\label{defn:chainduality}
A \textit{chain duality} on an additive category $\A$ is a pair $(T_\A,e_\A)$ where
\begin{itemize}
 \item $T_\A$ is a contravariant additive functor $T_\A:\A\to \BB(\A)$,
 \item $e_\A$ is a natural transformation $e_\A: T_\BB\circ T_\A \Rightarrow \iota_\A$ such that, for all $M\in \A$:
 \begin{itemize}
 \item $e_\BB(T_\A(M))\circ T_\BB(e_\A(M)) = \id_{T_\A(M)}: T_\A(M) \to T_\BB^2(T_\A(M)) \to T_\A(M)$,
 \item $e_\A(M):T_\BB(T_\A(M)) \to M$ is a chain equivalence.
 \end{itemize}
\end{itemize}
\end{defn}

The dual $T_\A(M)$ of an object $M$ is a chain complex in $\A$ and the extension $T_\BB$ defines the dual of a chain complex $C\in \BB(\A)$ as $T_\BB(C)$. A chain duality is used to define a tensor product of two objects $M,N\in \A$ over $\A$ as
\begin{equation}\label{eqn:tensorOverA}
M\otimes_\A N = \Hom_\A(T_\A(M),N) 
\end{equation}
which is a priori just a chain complex in the category $\mathrm{Ab}$ of Abelian groups. This generalises to chain complexes $C,D\in \BB(\A)$:
\[ C\otimes_\A D = \Hom_\A(T_\BB(C),D).\]
See \cite[p. $26$]{Ranicki(1992)} for the definition of $\Hom_\A(C,D)$ as the total complex of a double complex.

The tensor product $f\otimes_\A f^\prime:C\otimes_\A C^\prime \to D\otimes_\A D^\prime$ of a pair of morphisms $f:C\to D$, $f^\prime:C^\prime\to D^\prime$ in $\BB(\A)$
sends $\phi\in \Hom_\A(T_\BB(C),C^\prime)$ to 
\begin{equation}\label{eqn:tensorOfChainMaps}
(f\otimes_\A f^\prime)(\phi):=T_\BB(f)^*(f^\prime)_*(\phi) = f^\prime\circ \phi\circ T_\BB(f)\in \Hom_\A(T_\BB(C^\prime),D^\prime). 
\end{equation}
Tensor product over $\A$ thus defines a functor
\[-\otimes_\A-: \BB(\A)\times \BB(\A) \to \BB(\mathrm{Ab}).\]

\begin{ex}\label{ex:ringsMotivateGeneralCase}
For a ring $R$ let $\A(R)$ denote the additive category of f.g. free left $R$-modules. Let $T= \Hom_R(-,R): \A(R) \to \A(R)$. Then there is a natural isomorphism
\[e: T^2 \Rightarrow  \id_{\A(R)}\]
with $e(M) = \mathrm{ev}(M)^{-1}$ where 
\[\begin{array}{rcl}
  \mathrm{ev}(M):M&\stackrel{\cong}{\longrightarrow}& T^2(M) = \Hom_R(\Hom_R(M,R),R)\\
  m&\mapsto& (f\mapsto f(m)).
  \end{array}
\] 
One can easily check that $e(T(M))\circ T(e(M))= \id_{T(M)}$ for all $M\in \A(R)$ and thus that $(T,e)$ is a chain duality on $\A(R)$.

There is also a tensor product of f.g. free left $R$-modules such that 
\begin{equation}\label{eqn:tensorOverRIsHomOne}
M\otimes_R M^\prime \cong \Hom_R(T(M),M^\prime) 
\end{equation}
with switch isomorphisms $T_{M,M^\prime}:M\otimes_R M^\prime \stackrel{\cong}{\longrightarrow} M^\prime\otimes_R M$ corresponding to isomorphisms 
\begin{equation}\label{eqn:switchForR}
\begin{array}{rcl}\Hom_R(T(M),M^\prime) &\stackrel{\cong}{\longrightarrow}& \Hom_R(T(M^\prime),M)\\ \phi &\mapsto& e(M)\circ T(\phi).\end{array} 
\end{equation}
\end{ex}

The previous example is a special case of a chain duality, in the sense that duals are only modules rather than chain complexes. It does however motivate how a chain duality $(T_\A,e_\A)$ on an additive category $\A$ is used to define a tensor product by $(\ref{eqn:tensorOverA})$ together with switch isomorphisms
\[T_{C,D}: C\otimes_\A D \stackrel{\cong}{\longrightarrow} D\otimes_\A C\]such that $T_{D,C} = T_{C,D}^{-1}$, for all $C$,$D\in \BB(\A)$. The idea is to use $T_\A$ and $e_\A$ as in $(\ref{eqn:switchForR})$. This is explained in the following Propositions.

\begin{prop}\label{prop:TInducesChainMapOfHoms}
Let $T_\A: \A\to \BB(\A)$ be a contravariant functor with extension $T_\BB$. Then, for all $C,D\in \BB(\A)$, the extension $T_\BB$ induces a chain map
\[ T_\BB: \Hom_\A(C,D) \to \Hom_\A(T_\BB(D),T_\BB(C)).\]
\end{prop}
\begin{proof}
Since $\Hom_\A(C,D)_n = \sum_{p\in\Z} \Hom_\A(C_p,D_{p+n})$ an $n$-chain $\phi\in \Hom_\A(C,D)_n$ is a direct sum $\phi = \sum_{p\in\Z}\phi_p^{p+n}$ with components $\phi_p^{p+n}:C_p\to D_{p+n}$. Similarly \[\Hom_\A(T_\BB(D),T_\BB(C))_n = \sum_{p-q-r+s=n}\Hom_\A(T_\A(D_p)_q,T_\A(C_r)_s).\] The desired chain map 
\[ T_\BB: \Hom_\A(C,D) \to \Hom_\A(T_\BB(D),T_\BB(C))\]is defined by specifying the components
$T_\BB(\phi)_{p,q}^{r,s}:T_\A(D_p)_q\to T_\A(C_r)_s$ as
\begin{equation}\label{eqn:TBphiComponents1pt5}
 (T_\BB)_n(\phi)_{p,q}^{r,s}= \left\{\begin{array}{cc} (-1)^{nq}(-1)^{\frac{1}{2}n(n-1)}T_\A(\phi_{p-n}^p)_q,&s=q,r=p-n\\0, &\text{otherwise.}\end{array}\right.
\end{equation}
\end{proof}

\begin{rmk}\label{rmk:When2NotionsOfTBCoincide}
Let $\phi:C\to D$ be a morphism of $\BB(\A)$, i.e. a $0$-cycle $\phi\in \Hom_\A(C,D)_0$. Then $(T_\BB)_0(\phi)\in \Hom_\A(T_\BB(D),T_\BB(C))_0$ is precisely the image $T_\BB(\phi)$ of $\phi$ under the extended functor $T_\BB: \BB(\A)\to \BB(\A)$.
\end{rmk}

\begin{prop}\label{prop:TBSendsChainEquivsToChainEquivs}
Let $\phi\in \Hom_\A(C,D)_0$ be a chain equivalence. Then $(T_\BB)_0(\phi)\in \Hom_\A(T_\BB(D),T_\BB(C))_0$ is also a chain equivalence.
\end{prop}
\begin{proof}
Let $\phi\in\Hom_\A(C,D)_0$ have chain homotopy inverse $\psi\in\Hom_\A(D,C)_0$ and let $P\in\Hom_\A(C,C)_1$, $Q\in\Hom_\A(D,D)_1$ be chain homotopies
\[P:\psi\circ \phi \simeq \id_C,\quad Q:\phi\circ\psi\simeq \id_D.\]Then $(T_\BB)_0(\phi)\in\Hom_\A(T_\BB(D),T_\BB(C))_0$ has chain homotopy inverse $(T_\BB)_0(\psi)\in\Hom_\A(T_\BB(C),T_\BB(D))_0$. Also $(T_\BB)_1(P)\in \Hom_\A(T_\BB(C),T_\BB(C))_1$ and $(T_\BB)_1(Q)\in\Hom_\A(T_\BB(D),T_\BB(D))_1$ provide chain homotopies
\[(T_\BB)_1(P): (T_\BB)_0(\psi)\circ (T_\BB)_0(\phi)\simeq \id_{T_\BB(C)},\quad (T_\BB)_1(Q): (T_\BB)_0(\phi)\circ (T_\BB)_0(\psi)\simeq \id_{T_\BB(D)}.\]
\end{proof}

\begin{prop}\label{prop:TBIsNatTrans}
A contravariant functor $T_\A: \A\to \BB(\A)$ with extension $T_\BB$ induces a natural transformation also denoted $T_\BB$:
\[T_\BB: -\otimes_\A- \Rightarrow (-\otimes_\A-)\circ (\id_{\BB(\A)}\times T_\BB^2)\circ\mathcal{S}_{\BB(\A)}:\BB(\A)\times \BB(\A)\to \BB(\mathrm{Ab})\]by
\[T_\BB(C,D)= T_\BB: C\otimes_\A D \to D\otimes T_\BB^2(C)\]where $T_\BB$ is the chain map of Proposition \ref{prop:TInducesChainMapOfHoms} with $T_\BB(C)$ in place of $C$.
\end{prop}
\begin{proof}
Let $f:C\to D$, $f^\prime:C^\prime \to D^\prime$ be morphisms of $\BB(\A)$. Commutativity of the diagram
\begin{displaymath}
 \xymatrix{
C\otimes_\A C^\prime \ar[r]^-{T_\BB} \ar[d]_-{f\otimes_\A f^\prime} & C^\prime\otimes_\A T_\BB^2(C)\ar[d]^-{f^\prime\otimes_\A T_\BB^2(f)}\\
D\otimes_\A D^\prime \ar[r]^-{T_\BB} & D^\prime\otimes_\A T_\BB^2(D)
}
\end{displaymath}
can be checked directly using $(\ref{eqn:TBphiComponents1pt5})$ and the formulae for $f^\prime\otimes_\A T_\BB^2(f)$ and $f\otimes_\A f^\prime$.
\end{proof}

\begin{prop}\label{prop:switchFromTB}
For all $C,D\in \BB(\A)$ let 
\[ T_{C,D}: C\otimes_\A D \to D\otimes_\A C\]be defined as the composition
\[ T_{C,D}= (\id_D\otimes_\A e_\BB(C))\circ T_\BB: C\otimes_\A D \to D\otimes_\A T_\BB^2(C) \to D\otimes_\A C\]
where \[T_\BB: C\otimes_\A D \to D\otimes_\A T_\BB^2(C)\]is the chain map of Proposition \ref{prop:TInducesChainMapOfHoms} with $T_\BB(C)$ in place of $C$. Then $T_{C,D}$ is a chain isomorphism with inverse $T_{D,C}$.
\end{prop}
\begin{proof}
$T_{C,D}$ is defined as the composition of two chain maps and is therefore a chain map. The fact that $T_{D,C}\circ T_{C,D}= \id_{C\otimes_\A D}$ follows from the properties of $(T_\A,e_\A)$ being a chain duality on $\A$. More precisely, this uses naturality of $e_\A$ and that \[e_\BB(T_\A(C_p))\circ T_\BB(e_\A(C_p)) = \id_{T_\A(C_p)}: T_\A(C_p) \to T_\BB^2(T_\A(C_p)) \to T_\A(C_p)\]for all $p\in\Z$.
\end{proof}

\begin{rmk}
Applying remark \ref{rmk:When2NotionsOfTBCoincide} to the identity chain map $\id_{T_\BB(C)}:T_\BB(C)\to T_\BB(C)$ we have that $T_\BB: \Hom_\A(T_\BB(C),T_\BB(C))\to \Hom_\A(T_\BB^2(C),T_\BB^2(C))$ sends $\id_{T_\BB(C)}$ to $T_\BB(\id_{T_\BB(C)}) = \id_{T^2_\BB(C)}.$ Consequently, by definition of $T_{C,T_\BB(C)}$ we have
\begin{align}
T_{C,T_\BB(C)}(\id_{T_\BB(C)}) &= e_\BB(C)\circ T_\BB(\id_{T_\BB(C)})\notag\\
&= e_\BB(C)\circ \id_{T^2_\BB(C)}\notag\\
&= e_\BB(C).\notag
\end{align}
Thus $e_\BB(C)$ can be recovered from the switch isomorphisms.
\end{rmk}


\begin{prop}\label{prop:naturaliso}
A chain duality on $\A$ defines a natural isomorphism
\[ T_{-,-}: -\otimes_\A - \Rightarrow (-\otimes_\A-)\circ \mathcal{S}_{\BB(\A)}:\BB(\A)\times\BB(\A) \to \BB(\mathrm{Ab})\]such that \[(T_{-,-}\circ \id_{\mathcal{S}_{\BB(\A)}})\circ T_{-,-}: -\otimes_\A- \Rightarrow (-\otimes_\A-)\circ \mathcal{S}_{\BB(\A)}^2 = (-\otimes_\A-)\]is the identity natural isomorphism of the functor $-\otimes_\A-$.
\end{prop}
\begin{proof}
For all $C,D\in \BB(\A)$, define $T_{-,-}(C,D)= T_{C,D}$ as in Proposition \ref{prop:switchFromTB}. The fact that $T_{-,-}$ is a natural transformation follows directly from the fact that $e_\BB$ and the $T_\BB$ of Proposition \ref{prop:TBIsNatTrans} are natural transformations. The second part follows from the fact that $T_{D,C}\circ T_{C,D} = \id_{C\otimes_\A D}$.
\end{proof}

We have just seen that a chain duality $(T_\A,e_\A)$ is precisely the additional structure on an additive category $\A$ required so that if we define a tensor product over $\A$ by $-\otimes_\A-= \Hom_\A(T_\BB(-),-)$ there is a natural switch isomorphism $T_{-,-}: -\otimes_\A- \Rightarrow (-\otimes_\A-)\circ \mathcal{S}_{\BB(\A)}$ with the property that $T_{D,C} = T_{C,D}^{-1}$ for all $C,D\in \BB(\A)$. 

Conversely, suppose one starts with a contravariant functor $T_\A:\A\to \BB(\A)$ and a natural switch isomorphism $T_{-,-}: -\otimes_\A- \Rightarrow (-\otimes_\A-)\circ \mathcal{S}_{\BB(\A)}$ with the property that $T_{D,C} = T_{C,D}^{-1}$ for all $C,D\in \BB(\A)$. Then $T_\A$ and the switch isomorphism can be used to define a natural transformation $e_\A:T_\BB\circ T_\A\to \iota_\A$ such that $(T_\A,e_\A)$ satisfies all the properties of being a chain duality, except possibly that $e_\A(M): T_\BB(T_\A(M))\to M$ is a chain equivalence for all $M$:

\begin{prop}\label{prop:eTTeFormulaFromSwitchNatIso}
Let $T_\A:\A\to \BB(\A)$ be a contravariant functor and let \[-\otimes_\A- = \Hom_\A(T_\BB(-),-):\BB(\A)\times \BB(\A)\to \BB(\mathrm{Ab}).\]Suppose there exists a natural isomorphism \[T_{-,-}: -\otimes_\A- \Rightarrow (-\otimes_\A-)\circ \mathcal{S}_{\BB(\A)}\]such that $T_{D,C}\circ T_{C,D} = \id_{C\otimes_\A D}$ for all $C,D\in \BB(\A)$. Let
\[e_\BB(C)= T_{C,T_\BB(C)}(\id_{T_\BB(C)}),\]for all $C\in \BB(\A)$. Then
\begin{enumerate}
 \item this defines a natural transformation\[e_\BB: T_\BB^2 \Rightarrow \id_{\BB(\A)},\]
 \item $T_{C,D}(\phi) = e_\BB(C)\circ T_\BB(\phi)$, for all chain maps $\phi:T_\BB(C)\to D$,
 \item $e_\BB(T_\BB(C))\circ T_\BB(e_\BB(C)) = \id_{T_\BB(C)}$, for all $C\in \BB(\A)$.
\end{enumerate}
\end{prop}
\begin{proof}
\begin{enumerate}
 \item As $T_{-,-}$ is a natural transformation the following diagram commutes for all chain maps $f:C\to D$:
\begin{displaymath}
 \xymatrix@C=2cm{
C \otimes_\A T_\BB (C) \ar[r]^-{T_{C,T_\BB (C)}} \ar[d]_-{f\otimes_\A \id_{T_\BB (C)}} & T_\BB (C)\otimes_\A C \ar[d]^-{\id_{T_\BB (C)}\otimes_\A f}\\
D\otimes_\A T_\BB (C) \ar[r]^-{T_{D,T_\BB (C)}} & T_\BB (C)\otimes_\A D\\
D\otimes_\A T_\BB (D) \ar[u]^-{\id_D\otimes_\A T_\BB (f)} \ar[r]^-{T_{D,T_\BB (D)}} & T_\BB (D)\otimes_\A D \ar[u]_-{T_\BB (f)\otimes_\A \id_D}
}
\end{displaymath}
Observing where $\id_{T_\BB (C)}$ and $\id_{T_\BB (D)}$ map in this diagram we obtain:
\begin{displaymath}
 \xymatrix@C=2cm{
\id_{T_\BB (C)} \ar@{|->}[r] \ar@{|->}[d] & e_\BB(C) \ar@{|->}[d]\\
T_\BB (f) \ar@{|->}[r] & x\\
\id_{T_\BB (D)} \ar@{|->}[u] \ar@{|->}[r] & e_\BB(D) \ar@{|->}[u]
}
\end{displaymath}
where 
\[ T_{D,TC}(T_\BB (f)) = f\circ e_\BB(C) = e_\BB(D)\circ T_\BB^2(f)=x.\]
Hence $e_\BB$ is a natural transformation as required.
 \item Let $\phi:T_\BB(C)\to D$ be a chain map. Then, by naturality of the switch natural isomorphism, the following diagram commutes:
\begin{displaymath}
 \xymatrix@C=2cm{
C\otimes_\A T_\BB(C) \ar[r]^-{T_{C,T_\BB(C)}} \ar[d]_-{\id_C\otimes_\A \phi} & T_\BB(C)\otimes_\A C \ar[d]^-{\phi\otimes_\A \id_C}\\
C\otimes_\A D \ar[r]^-{T_{C,D}} & D\otimes_\A C.
}
\end{displaymath}
Applying this to the chain map $\id_{T_\BB(C)}:T_\BB(C)\to T_\BB(C)$ gives
\[T_{C,D}((\id_C\otimes_\A\phi)(\id_{T_\BB(C)})) = (\phi\otimes_\A \id_C)(T_{C,T_\BB(C)}(\id_{T_\BB(C)})).\]
Thus
\[T_{C,D}(\phi) = T_{C,T_\BB(C)}(\id_{T_\BB(C)})\circ T_\BB(\phi)\]
as required.
\item 
As $T_{C,D}: C\otimes_\A D \to D \otimes_\A C$ is a chain map for all $C,D\in \BB(\A)$ it follows that $(T_{C,D})_0$ sends chain maps to chain maps. Hence $e_\BB(C)= T_{C,T_\BB(C)}(\id_{C\otimes_\A T_\BB(C)})$ is a chain map. Applying part $(1)$ to the equality 
\[ T_{T_\BB(C),C}(T_{C,T_\BB(C)}(\id_{C\otimes_\A T_\BB(C)})) = \id_{C\otimes_\A T_\BB(C)}\]gives
\begin{align}
\id_{C\otimes_\A T_\BB(C)} &= T_{T_\BB(C),C}(T_{C,T_\BB(C)}(\id_{C\otimes_\A T_\BB(C)}))\notag\\
&= T_{T_\BB(C),C}(e_\BB(C))\notag\\
&= e_\BB(T_\BB(C))\circ T_\BB(e_\BB(C))\notag
\end{align}
as required.
\end{enumerate}
\end{proof}

The previous proposition indicates a strategy for defining a chain duality on an additive category $\A$:
\begin{enumerate}
 \item Define a contravariant functor $T_\A: \A\to \BB(\A)$ and obtain its extension $T_\BB:\BB(\A)\to \BB(\A)$. 
 \item Define the tensor product over $\A$ using $T_\BB$ by 
\[ -\otimes_\A- = \Hom_\A(T_\BB(-),-).\]
 \item Exhibit a natural isomorphism \[T_{-,-}: -\otimes_\A- \Rightarrow (-\otimes_\A-)\circ \mathcal{S}_{\BB(\A)}\]with the property that $T_{D,C} = T_{C,D}^{-1}$ for all $C,D\in \BB(\A)$.
 \item Define $e_\BB: T_\BB^2 \to \id_{\BB(\A)}$ by $e_\BB(C)= T_{C,T_\BB(C)}(\id_{C\otimes_\A T_\BB(C)})$.
 \item Prove that $e_\BB(C): T_\BB^2(C) \to C$ is a chain equivalence, for all $C\in \BB(\A)$. 
 \item Using Proposition \ref{prop:eTTeFormulaFromSwitchNatIso} the pair $(T_\BB,e_\BB)$ is a chain duality on $\BB(\A)$ and restricting to $\A\subset \BB(\A)$ this gives a chain duality $(T_\A,e_\A)$ on $\A$.
\end{enumerate}
This strategy is employed in section \ref{sec:chain-duality-on-cats-over-ball-cplxs-saf} to define a chain duality on additive categories over ball complexes. 

\section{Categories over ball complexes} \label{sec:cats-over-ball-cplxs}
In this section we introduce the categories $\A^*[X]$, $\A_*[X]$, $\A^*(X)$ and $\A_*(X)$ and develop some of the tools necessary to define a chain duality on $\A^*(X)$ or $\A_*(X)$. Much of the content can be found in \cite{Ranicki(1992)}; any differences are clearly indicated.

Let $X$ be a ball complex and $\A$ an additive category.

\begin{defn}\label{defn:ballCxAsACategory}
A ball complex $X$ is regarded as a category with objects the set of balls $\sigma\in X$ and morphisms $\tau\to\sigma$ for all inclusions $\tau\leqslant\sigma$.
\end{defn}

\begin{defn}\label{defn:squarebracketcats}
Let $\A^*[X]$ and $\A_*[X]$ denote the additive categories whose objects are respectively covariant and contravariant functors $M:X\to \A$ and whose morphisms are natural transformations of such functors. 
\end{defn}

%

\begin{defn}\label{defn:pushforwardofsquarebracketcategories}
Let $\sF:\A\to \A^\prime$ and $\sG:\A\to\A^\prime$ be covariant and contravariant functors respectively. Then 
post-composition with $\sF$ or $\sG$ defines the following push-forward functors
\begin{align}
\sF_*: \A^*[X] \to (\A^\prime)^*[X],\quad \sF_*: \A_*[X] \to (\A^\prime)_*[X], \notag\\
\sG_*: \A^*[X] \to (\A^\prime)_*[X],\quad \sG_*: \A_*[X] \to (\A^\prime)^*[X]. \notag
\end{align}
\end{defn}

\begin{rmk}\label{rmk:chainCplxsInFunctorCatsBehave}
A bounded chain complex in $\A_*[X]$ is just an object in $\BB(\A)_*[X]$ and similarly for chain maps, so that $\BB(\A_*[X]) = \BB(\A)_*[X]$. Similarly we have $\BB(\A^*[X]) = \BB(\A)^*[X]$.
\end{rmk}

\begin{defn}\label{defn:roundbracketcats}
\begin{enumerate}
 \item An object $M\in \A$ is \textit{$X$-based} if it is expressed as a direct sum \[M = \sum_{\sigma\in X}M(\sigma)\]of objects $M(\sigma)\in \A$. A morphism $f:M\to N$ of $X$-based objects is a collection of morphisms in $\A$ \[f = \{f_{\tau,\sigma}:M(\sigma)\to N(\tau):\sigma,\tau\in X\}.\]
 \item Let the \textit{$X$-graded} category $\mathbb{G}_X(\A)$ be the additive category of $X$-based objects of $\A$ and morphisms $f:M\to N$ of $X$-based objects of $\A$. 
 
The composition of morphisms $f:L\to M$, $g:M\to N$ in $\mathbb{G}_X(\A)$ is the morphism $g\circ f:L\to N$ defined by \[(g\circ f)_{\rho, \sigma} = \sum_{\tau\in X} g_{\rho,\tau}f_{\tau,\sigma}: L(\sigma) \to N(\rho). \]
 \item Let $\brc{\A^*(X)}{\A_*(X)}$ denote the additive category of $X$-based objects $M$ in $\A$ with morphisms $f:M\to N$ such that $f_{\tau,\sigma}:M(\sigma)\to N(\tau)$ is zero unless $\brc{\tau\leqslant \sigma}{\tau\geqslant \sigma.}$
\end{enumerate}
\end{defn}
%
%

\begin{notn}
Let $\A(X)$ denote either $\A^*(X)$ or $\A_*(X)$ and $\A[X]$ either $\A^*[X]$ or $\A_*[X]$ when it doesn't matter which category is used. When this shorthand is used multiple times in a statement it is assumed that the same choice of upper or lower star is consistently made.
\end{notn}

\begin{defn}\label{defn:restrictionToSubBallComplexPair}
Let $X$ be a ball complex and let $Z\subseteq Y \subseteq X$ so that $(Y,Z)$ is a ball complex pair. Define the restriction to $(Y,Z)$
\[ -|_{(Y,Z)}: \A(X) \to \A(Y)\subset \A(X)\]by
\begin{align}
 M|_{(Y,Z)}(\sigma) &= M(\sigma), \forall \sigma\in Y,\; \sigma\notin Z,\notag\\
 M|_{(Y,Z)}(f)_{\tau,\sigma} &= f_{\tau,\sigma}, \forall \tau,\sigma\in Y,\; \tau,\sigma\notin Z.\notag
\end{align}
Denote the restriction to $(Y,\partial Y)$ by $-|_{\mathring{Y}}$ and the restriction to $(Y,\emptyset)$ by $-|_Y$. 
\end{defn}

\begin{defn}\label{defn:totalAssembly}
Define the \textit{total assembly} functor $\mathrm{Ass}: \mathbb{G}_X(\A) \to \A$ by
\begin{align} 
\mathrm{Ass}(M) &= \sum_{\sigma\in X} M(\sigma)\notag\\
\mathrm{Ass}(f:M\to N) &= \{f_{\tau,\sigma}\}_{\tau,\sigma\in X}: \sum_{\sigma\in X}M(\sigma)\to \sum_{\tau\in X}N(\tau).\notag   
\end{align}

Note that total assembly is an equivalence of additive categories. Also denote by $\mathrm{Ass}$ the total assembly restricted to $\A^*(X)$ and $\A_*(X)$.
\end{defn}

\begin{rmk}
Let $\M(\Z)$ denote the additive category of left $\Z$-modules. Following \cite{Ranicki(1992)} we write  
\[\brc{\Z^*(X)= \M(\Z)^*(X)}{\Z_*(X)= \M(\Z)_*(X)}\quad \brc{\Z^*[X]= \M(\Z)^*[X]}{\Z_*[X]= \M(\Z)_*[X].} \]
\end{rmk}

\begin{ex}\label{ex:cellchandcochcxsareXctrld}
Let $X$ be a ball complex and let $\A(\Z)$ be as defined in Example \ref{ex:ringsMotivateGeneralCase} for the ring of integers $\Z$.

The cellular chain complex $C_*(X;\Z)$ of $X$ is naturally a finite chain complex in $\A(\Z)^*(X)$ with $C_*(X;\Z)(\sigma) = S^{|\sigma|}\Z$, for all $\sigma\in X$.

The cellular cochain complex $C^{-*}(X;\Z)$ of $X$ is naturally a finite chain complex in $\A(\Z)_*(X)$ with $C^{-*}(X;\Z)(\sigma) = S^{-|\sigma|}\Z$, for all $\sigma\in X$.
\end{ex}

\begin{ex}\label{ex:subdivsAndDualCells}
Let $X^\prime$ denote the canonical derived subdivision of a structured ball complex $X$. Then the cellular chain and cochain complexes of $X^\prime$ can be viewed as chain complexes in $\BB(\A(\Z)_*(X))$ and $\BB(\A(\Z)^*(X))$ respectively as follows.

Let $D\in \BB(\A(\Z)_*(X))$ be the chain complex with 
\[D(\sigma) = C_*(D(\sigma,X),\partial D(\sigma,X);\Z)\] and differential $(d_D)_{\tau,\sigma}$ obtained by assembling $d_{C_*(X^\prime)}$ appropriately. Then $\mathrm{Ass}(D) = C_*(X^\prime)$.

Similarly the chain complex $D^*\in \BB(\A(\Z)^*(X))$ with 
\[D^*(\sigma) = C^{-*}(D(\sigma,X),\partial D(\sigma,X);\Z)\] and differential $(d_{D^*})_{\tau,\sigma}$ obtained by assembling $d_{C^{-*}(X^\prime)}$ appropriately is such that $\mathrm{Ass}(D^*) = C^{-*}(X^\prime)$.
\end{ex}

Example \ref{ex:subdivsAndDualCells} will be generalised in section \ref{sec:signatures} to a manifold $F$ equipped with a reference map $r_F:F\to L$ to a ball complex. The preimages of dual cells in $L$ are used to dissect $F$.

%
%

\begin{defn}\label{defn:CofXAsCellularChCxs}
Let $C(X)$ denote either the $\Z$-coefficient cellular chain complex $C_*(X;\Z)\in \BB(\A(\Z)^*(X))$ or cochain complex $C^{-*}(X;\Z)\in \BB(\A(\Z)_*(X))$.
\end{defn}

The following Proposition illustrates a very important property of the categories $\A^*(X)$ and $\A_*(X)$. It can be thought of as being analogous to the statement in linear algebra that a triangular matrix is invertible if and only if all its diagonal entries are.

\begin{prop}\label{prop:chcont}
\begin{enumerate}
 \item A chain map $f:C\to D$ in $\A(X)$ is a chain isomorphism if and only if $f_{\sigma,\sigma}:C(\sigma)\to D(\sigma)$ is a chain isomorphism in $\A$ for all $\sigma\in X$.
 \item A chain complex $C$ in $\A(X)$ is chain contractible if and only if $C(\sigma)$ is chain contractible in $\A$ for all $\sigma\in X$.
 \item A chain map $f:C\to D$ of chain complexes in $\A(X)$ is a chain equivalence if and only if $f_{\sigma,\sigma}:C(\sigma)\to D(\sigma)$ is a chain equivalence in $\A$ for all $\sigma\in X$.
\end{enumerate}
\end{prop}
\begin{proof}
These results are well-known for $X$ a simplicial complex (see for example Prop. $4.7$ of \cite{Ranicki(1992)} for parts $(2)$ and $(3)$); Proposition $7.26$ of \cite{adams-florou_homeomorphisms_2012} contains parts $(2)$ and $(3)$, the proof contains the correct formulae but falsely asserts in the last line of page 68 that away from the diagonal $dP=Pd=0$, the correct statement is that $dP+Pd = 0$. The simplicial complex proof generalises verbatim to ball complexes.
\end{proof}

\begin{defn}\label{embeddingfunctors}
Define covariant functors $\mathcal{I}_{X,\A}:\A(X)\to \A[X]$ by sending an object $M\in\A(X)$ to the functor 
$\mathcal{I}_{X,\A}(M):X\to \A$ which sends $\sigma\in X$ to 
\[\sI_{X,\A}(M)(\sigma) = \brcc{\sum_{\rho\leqslant \sigma}M(\rho),}{\A(X) = \A^*(X),}{\sum_{\sigma\leqslant \rho}M(\rho),}{\A(X) = \A_*(X)}\]and a morphism $\tau\to \sigma$ in $X$ to the inclusion map \[\sI_{X,\A}(M)(\tau\to \sigma):\brcc{\sI_{X,\A}(M)(\tau)\hookrightarrow \sI_{X,\A}(M)(\sigma),}{\A(X)=\A^*(X),}{\sI_{X,\A}(M)(\sigma)\hookrightarrow \sI_{X,\A}(M)(\tau),}{\A(X)=\A_*(X).}\]
A morphism $f:M\to N$ in $\A(X)$ is sent to the natural transformation \[\sI_{X,\A}(f):\sI_{X,\A}(M)\Rightarrow \sI_{X,\A}(N):X\to \A\] where \[\sI_{X,\A}(f)(\sigma)= \brcc{\{f_{\tau,\rho}\}_{\rho\leqslant \tau\leqslant \sigma}:\sum_{\rho\leqslant\sigma}M(\rho)\to \sum_{\tau\leqslant \sigma}N(\tau),}{\A(X) = \A^*(X),}{\{f_{\tau,\rho}\}_{\sigma\leqslant \tau\leqslant \rho}:\sum_{\sigma\leqslant\rho}M(\rho)\to \sum_{\sigma\leqslant \tau}N(\tau),}{\A(X) = \A_*(X).}\]
\end{defn}

\begin{rmk}\label{rmk:notationalDifferences}
There are a few notational differences worth highlighting:
\begin{itemize}
 \item For a functor $C:X\to \A$ in $\A[X]$ we use the notation $C(\sigma)$ where Ranicki would write $C[\sigma]$.
 \item In \cite{Ranicki(1992)} the functor $\sI_{X,\A}$ and its extension $(\sI_{X,\A})_\BB$ are both denoted by square brackets, i.e. $\sI_{X,\A}(M)=[M]$, $(\sI_{X,\A})_\BB(C)=[C]$. We prefer to distinguish $\sI_{X,\A}$ from $(\sI_{X,\A})_\BB$.
 \item Combining the above, the notation $\sI_{X,\A}(M)(\sigma)$ corresponds to $[M][\sigma]$ in \cite{Ranicki(1992)}.
\end{itemize}
\end{rmk}

\begin{ex}\label{ex:expressingIasAss}
The functors $\mathcal{I}_{X,\A}$ can be expressed using restriction and assembly functors as follows.
\begin{align}
\mathcal{I}_{X,\A}(M)(\sigma) &= \brc{\mathrm{Ass}(M|_{\sigma}),}{\mathrm{Ass}(M|_{\mathrm{st}(\sigma)}),}\notag \\
\mathcal{I}_{X,\A}(M)(\tau\to \sigma)&=\brc{\mathrm{incl.}:\mathrm{Ass}(M|_{\tau})\hookrightarrow\mathrm{Ass}(M|_{\sigma}),}{\mathrm{incl.}:\mathrm{Ass}(M|_{\mathrm{st}(\sigma)})\hookrightarrow\mathrm{Ass}(M|_{\mathrm{st}(\tau)}),}\notag \\
\mathcal{I}_{X,\A}(f)(\sigma)&= \brc{\mathrm{Ass}(f|_\sigma),}{\mathrm{Ass}(f|_{\mathrm{st}(\sigma)}),}\notag
\end{align}
in the case where \[\A(X) = \brc{\A^*(X)}{\A_*(X)}\]
where the open star $\mathrm{st}(\sigma)$ denotes the simplicial pair $(\mathrm{St}(\sigma),\partial \mathrm{St}(\sigma))$ consisting of the closed star of $\sigma$ and its boundary.
\end{ex}

\begin{ex}\label{ex:whatIsIOfDualCellsEx}
Let $D\in \BB(\A(\Z)_*(X))$ and $D^*\in \BB(\A(\Z)^*(X))$ be as in Example \ref{ex:subdivsAndDualCells}. For a subcomplex $Y\subset X$ let $N(Y,X)$ denote the open neighbourhood of $Y$ in $X$ defined by \[ \mathring{\sigma}\subset N(Y,X)\;\Leftrightarrow\; \sigma\cap Y\neq \emptyset.\]

It follows that
\begin{align}
 \bigcup_{\tau\geqslant\sigma} D(\tau,X)\setminus \partial D(\tau,X) &= D(\sigma,X),\notag\\
 \bigcup_{\tau\leqslant\sigma} D(\tau,X)\setminus \partial D(\tau,X) &= N(\sigma^\prime,X^\prime),\notag
\end{align}
where $X^\prime$ is the canonical derived subdivision of $X$.

Then $(\sI_{X,\A})_\BB(D)$ is the functor in $\BB(\A(\Z)_*[X]) = \BB(\A(\Z))_*[X]$ given by 
\linebreak$(\sI_{X,\A})_\BB(D)(\sigma)= C_*(D(\sigma,X);\Z),$ for all $\sigma\in X$ and such that\[(\sI_{X,\A})_\BB(D)(\tau\to\sigma):C_*(D(\sigma,X);\Z)\to C_*(D(\tau,X);\Z)\]is an inclusion map, for all $\tau\leqslant\sigma$.

Similarly $(\sI_{X,\A})_\BB(D^*)$ is the functor in $\BB(\A(\Z)^*[X]) = \BB(\A(\Z))^*[X]$ given by 
\linebreak$(\sI_{X,\A})_\BB(D^*)(\sigma)= C^{-*}(N(\sigma^\prime,X^\prime);\Z),$ for all $\sigma\in X$ and such that 
\[(\sI_{X,\A})_\BB(D^*)(\tau\to\sigma): C^{-*}(N(\tau^\prime,X^\prime);\Z)\to C^{-*}(N(\sigma^\prime,X^\prime);\Z)\]is an inclusion map, for all $\tau\leqslant\sigma$.
\end{ex}

The tensor product $-\otimes_\A-=\Hom_\A(T_\BB(-),-)$ and its switch natural isomorphism (see Prop. \ref{prop:naturaliso}) together define a tensor product, also denoted $-\otimes_\A-$, on the functor categories $\BB(\A[X])$ as follows.

\begin{defn}\label{defn:tensorprodinfunctorcats}
Let $(T_\A, e_\A)$ be a chain duality on an additive category $\A$ with extensions $(T_\BB,e_\BB)$ as in Proposition \ref{prop:extende}. Define tensor product functors
\begin{align}
 -\otimes_\A- : \BB(\A^*[X]) \times \BB(\A^*[X]) &\to \BB(\Z)^*[X] \notag \\
 -\otimes_\A- : \BB(\A_*[X]) \times \BB(\A_*[X]) &\to \BB(\Z)_*[X] \notag
\end{align}
by sending an object $(C,D)$ of $\BB(\A[X])\times \BB(\A[X])$ to the object $C\otimes_\A D$ of $\BB(\Z)[X]$ given by
\begin{align}
 (C\otimes_\A D)(\sigma) &= C(\sigma)\otimes_\A D(\sigma) = \Hom_\A(T_\BB(C(\sigma)),D(\sigma)),\notag \\
 (C\otimes_\A D)(\tau\to\sigma)&= C(\tau\to\sigma)\otimes_\A D(\tau\to\sigma) = T_\BB(C(\tau\to\sigma))^*(D(\tau\to\sigma))_*\notag
\end{align}
and sending a morphism $(f:C\to D,f^\prime:C^\prime\to D^\prime)$ of $\BB(\A[X])\times \BB(\A[X])$ to the morphism $f\otimes_\A f^\prime$ of $\BB(\Z)[X]$ given by 
\[(f\otimes_\A f^\prime)(\sigma)= T_\BB(f(\sigma))^*f^\prime(\sigma)_*: C(\sigma)\otimes_\A C^\prime(\sigma)\to D(\sigma)\otimes_\A D^\prime(\sigma).\]
\end{defn}

\begin{prop}\label{prop:functorcatdualityisos}
For the tensor product $-\otimes_\A-$ of Definition \ref{defn:tensorprodinfunctorcats}, there are natural isomorphisms 
\[ T_{-,-}:-\otimes_\A- \Rightarrow (-\otimes_\A-)\circ \mathcal{S}_{\BB(\A[X])}\]with $T_{C,D}: C\otimes_\A D \stackrel{\cong}{\longrightarrow} D\otimes_\A C$ the chain isomorphism defined by \[T_{C,D}(\sigma)= T_{C(\sigma),D(\sigma)}: C(\sigma)\otimes_\A D(\sigma) \to D(\sigma)\otimes_\A C(\sigma)\]
where $T_{C(\sigma),D(\sigma)}= (\id_{D(\sigma)}\otimes_\A e_\BB(C(\sigma)))\circ T_\BB$ as in Proposition \ref{prop:switchFromTB}.
\end{prop}
\begin{proof}
This follows directly from Definition \ref{defn:tensorprodinfunctorcats}, naturality of $T_\BB$ (Prop. \ref{prop:TBIsNatTrans}) and naturality of $e_\BB$.
\end{proof}

A chain duality can also be applied componentwise to switch between upper and lower star categories as follows.
\begin{prop}\label{prop:defineMathcalT}
A chain duality $(T_\A,e_\A)$ on $\A$ and its extension $(T_{\BB},e_{\BB})$ to $\BB(\A)$ induce functors
\begin{align}
 \mathcal{T}_\A: \A_*(X) &\to \BB(\A^*(X)),\notag \\
 \mathcal{T}_\A: \A^*(X) &\to \BB(\A_*(X)),\notag
\end{align}
with extensions 
\begin{align}
 \mathcal{T}_{\BB}: \BB(\A_*(X)) &\to \BB(\A^*(X)),\notag \\
 \mathcal{T}_{\BB}: \BB(\A^*(X)) &\to \BB(\A_*(X)),\notag
\end{align}
and equivalences of functors 
\begin{align}
\epsilon_\A: \mathcal{T}_{\BB}\circ \mathcal{T}_\A &\Rightarrow \iota_{\A(X)}:\A(X) \to \BB(\A(X)), \notag \\
\epsilon_\BB: \mathcal{T}_{\BB}\circ \mathcal{T}_{\BB} &\Rightarrow \id_{\BB(\A(X))}:\BB(\A(X)) \to \BB(\A(X)). \notag
\end{align}
\end{prop}
\begin{proof}
For an object $M\in \A(X)$ let $\mathcal{T}_\A(M)$ be the chain complex defined by
\begin{align}
 \mathcal{T}_\A(M)(\sigma)_n &= T_\A(M(\sigma))_{n},\notag\\
 ((d_{\mathcal{T}_\A(M)})_{n})_{\tau,\sigma} &= \brcc{(d_{T_\A(M(\sigma))})_{n},}{\tau=\sigma,}{0,}{\tau\neq\sigma.}\notag
\end{align}
For a morphism $f:M\to N$ in $\A(X)$ let $\mathcal{T}_\A(f):\mathcal{T}_\A(N)\to \mathcal{T}_\A(M)$ be the chain map with $n^\mathrm{th}$ component given by
\[(\mathcal{T}_\A(f)_{n})_{\tau,\sigma}= T_\A(f_{\sigma,\tau})_{n}: T_\A(N(\sigma))_{n}\to T_\A(M(\tau))_{n}.\]The fact that this is a chain map follows directly from functoriality of $T_\A$, as does functoriality of $\mathcal{T}_\A$. 

The extension $\mathcal{T}_\BB = (\mathcal{T}_\A)_\BB$ is defined in the usual way and it follows that $\mathcal{T}_{\BB(\A)}(\mathcal{T}_\A(M))(\sigma) = T_{\BB(\A)}(T_\A(M(\sigma)))$ for all $\sigma\in X$ so the natural transformation
\[\epsilon_\A: \mathcal{T}_\BB\circ \mathcal{T}_\A \Rightarrow \iota_{\A(X)}\]defined by
\[\epsilon_\A(M)_{\tau,\sigma}= \brcc{e_\A(M(\sigma)),}{\tau=\sigma,}{0,}{\tau\neq\sigma}\]is an equivalence of functors. By Proposition \ref{prop:extende} all the above extends to $\BB(\A(X))$.
\end{proof}

\begin{defn}\label{defn:tensoroverZ}
Define tensor product functors
\begin{align}
-\otimes_\Z-: \A(\Z)\times \A &\to \A\notag\\ 
-\otimes_\Z-: \A\times \A(\Z) &\to \A\notag
\end{align}
as follows. For all $\Z^n\in\A(\Z)$, $M\in\A$ let  
\[\Z^n \otimes_\Z M = M\otimes_\Z \Z^n= M^n = \underbrace{M\oplus \ldots \oplus M}_{n\;\mathrm{copies}}.\]
For all morphisms $f:M\to N$ in $\A$ and $g:\Z^m\to \Z^n$ in $\A(\Z)$ let \[g\otimes_\Z f = f\otimes_\Z g: M^m \to N^n\] be the morphism with $(i,j)^\mathrm{th}$ component 
\[(g\otimes_\Z f)_{ij}= (f\otimes_\Z g)_{ij} = \underbrace{f+\ldots + f}_{g_{ij}\;\mathrm{times}}:M\to N.\]
These extend to bounded chain complexes in the usual way:
\begin{align}
 (C\otimes_{\Z} D)_n &= \sum_{p+q=n} C_p\otimes_\Z D_q,\notag\\
 (d_{C\otimes_{\Z} D})_n &= \sum_{p+q=n} (d_C)_p\otimes_\Z (\id_D)_q + (-1)^p(\id_C)_p\otimes_\Z (d_D)_q,\notag\\
 (f\otimes_{\Z} g)_n &= \sum_{p+q=n} f_p\otimes_\Z g_q.\notag
\end{align}
\end{defn}

The following Proposition defines a new tensor product between round and square bracket categories. This tensor product gives rise to a simple interpretation of Ranicki's functor $T_{\A(X)}$ (c.f. \cite[Prop. $5.1$]{Ranicki(1992)}) for $X$ a finite simplicial complex. Extending this to a ball complex $X$, this tensor product is also instrumental in proving that $(T_{\A(X)}, e_{\A(X)})$ is a chain duality.

\begin{prop}\label{prop:defineXgradedtensorproducts}
Let $\A$ be an additive category and $X$ a ball complex. Let $\mathcal{C}$ be either $\A$ or $\A(\Z)$ and $\mathcal{D}$ the other of the two. Then there are tensor product functors
\begin{align}
 -\otimes - : \BB(\mathcal{C}^*(X))\times \BB(\mathcal{D})_*[X] &\to \BB(\A^*(X))\notag\\
 -\otimes - : \BB(\mathcal{C}_*(X))\times \BB(\mathcal{D})^*[X] &\to \BB(\A_*(X))\notag
\end{align}
defined as follows.

For a pair of objects $(C,D)\in \brc{\BB(\mathcal{C}^*(X))\times \BB(\mathcal{D})_*[X]}{\BB(\mathcal{C}_*(X))\times \BB(\mathcal{D})^*[X]}$ let $C\otimes D$ be the object in 
$\brc{\BB(\A^*(X))}{\BB(\A_*(X))}$ with $n$-chains given by
\[ (C\otimes D)_n(\sigma) = \sum_{p+q=n} C_p(\sigma)\otimes_\Z D(\sigma)_q\]for $-\otimes_\Z-$ the tensor product of Definition \ref{defn:tensoroverZ} and differential 
\[ ((d_{C\otimes D})_{n})_{\tau,\sigma}: (C\otimes D)_n(\sigma)\to (C\otimes D)_{n-1}(\tau)\]given by
\begin{equation}\label{eqn:tensordifferential}
\brc{\sum_{p+q=n}\left( ((d_C)_{p})_{\tau,\sigma}\otimes_\Z D(\tau \to \sigma)_q + (-1)^p(\id_{C_p})_{\tau,\sigma}\otimes_\Z (d_{D(\sigma)})_q\right),}{\sum_{p+q=n}\left( ((d_C)_{p})_{\tau,\sigma}\otimes_\Z D(\sigma \to \tau)_q + (-1)^p(\id_{C_p})_{\tau,\sigma}\otimes_\Z (d_{D(\sigma)})_q\right).} 
\end{equation}
For a pair $(f:C\to D, f^\prime: C^\prime\Rightarrow D^\prime)$ of morphisms of \[\brc{\BB(\mathcal{C}^*(X))\times \BB(\mathcal{D})_*[X]}{\BB(\mathcal{C}_*(X))\times \BB(\mathcal{D})^*[X]}\]
the morphism $f\otimes f^\prime: C\otimes C^\prime \to D\otimes D^\prime$ of $\brc{\BB(\A^*(X))}{\BB(\A_*(X))}$ is defined by 
\[((f\otimes f^\prime)_{n})_{\tau,\sigma}= \delta_{\tau\sigma}\sum_{p+q=n}(f_{p})_{\sigma,\sigma}\otimes_\Z f^\prime(\sigma)_q: (C\otimes C^\prime)_n(\sigma)\to (D\otimes D^\prime)_n(\tau)\]where $\delta_{\tau\sigma}$ is a Kronecker delta function.
\end{prop}
\begin{proof}
Straightforward.
\end{proof}

\begin{rmk}\label{rmk:actuallyEightTensorProducts}
One can of course also define tensor products as in Proposition \ref{prop:defineXgradedtensorproducts} but with the order of categories in the domain switched, i.e. tensor products 
\begin{align}
 -\otimes - : \BB(\mathcal{D})_*[X] \times \BB(\mathcal{C}^*(X)) &\to \BB(\A^*(X))\notag\\
 -\otimes - : \BB(\mathcal{D})^*[X]\times \BB(\mathcal{C}_*(X))  &\to \BB(\A_*(X)).\notag
\end{align}
There are natural switch isomorphisms between these tensor products and those of Proposition \ref{prop:defineXgradedtensorproducts}. The chain isomorphisms $C\otimes D \cong D\otimes C$ use a sign of $(-1)^{pq}$ for the component sending $C_p\otimes_\Z D_q$ to $D_q\otimes_\Z C_p$.
\end{rmk}


\begin{ex}\label{ex:tensorWithUnderlineZIsId}
As in Example $4.5$ of \cite{Ranicki(1992)} let $\underline{\Z}\in\brc{\A(\Z)^*[X]\subset \BB(\A(\Z))^*[X]}{\A(\Z)_*[X]\subset \BB(\A(\Z))_*[X]}$ denote the functor with $\underline{\Z}(\sigma) = \Z$ for all $\sigma\in X$ and $\underline{\Z}(\tau\to\sigma) = \id_\Z$ for all $\tau\leqslant \sigma$. 

Then $-\otimes\underline{\Z}$ is seen to be the identity functor on $\brc{\BB(\A_*(X))}{\BB(\A^*(X))}$ by examining the formulae of Proposition \ref{prop:defineXgradedtensorproducts}. As $\underline{\Z}(\sigma)_q$ is $\Z$ when $q=0$ and $0$ otherwise it follows that 
\[ (C\otimes \underline{\Z})_n(\sigma) = \sum_{p+q=n}C_p(\sigma)\otimes_\Z \underline{\Z}(\sigma)_q = C_n(\sigma)\otimes_\Z \Z = C_n(\sigma).\]As $(d_{\underline{\Z}(\sigma)}) = 0$ for all $\sigma\in X$ it follows that the second term in $(\ref{eqn:tensordifferential})$ is always zero. As $D(\tau\to\sigma)_q$ is $id_\Z$ when $q=0$ and $\brc{\tau\leqslant\sigma}{\sigma\leqslant\tau}$ and $0$ otherwise it follows that the first term agrees with the differential of the original chain complex $C$. 
\end{ex}

Another important chain complex to tensor with is $C(X)$, the $\Z$-coefficient cellular chain or cochain complex of $X$. 

\begin{defn}\label{defn:shiftfunctors}
Define the \textit{shift functors}
\begin{align}
 \mathrm{sh}: \BB(\A)^*[X]&\to \BB(\A_*(X))\notag\\
 \mathrm{sh}: \BB(\A)_*[X]&\to \BB(\A^*(X))\notag
\end{align}
as the functors that tensor on the left with the chain complex $C(X)$ of Definition \ref{defn:CofXAsCellularChCxs} using the tensor products of Proposition \ref{prop:defineXgradedtensorproducts}, i.e. $\mathrm{sh} = C(X)\otimes-$.
\end{defn}

\begin{ex}\label{ex:shiftFunctorsAreRanickiCovAssFunctors}
We see that 
\[\mathrm{sh}(C)_n(\sigma) = \brc{C(\sigma)_{n+|\sigma|},}{C(\sigma)_{n-|\sigma|},}\]
with differential
\[((d_{\mathrm{sh}(C)})_{n})_{\tau,\sigma} = \brc{\begin{array}{l} ((d_{C^{-*}(X;\Z)})_{-|\sigma|})_{\tau,\sigma}\otimes_\Z C(\sigma\to\tau)_{n+|\sigma|} \\ \quad + (-1)^{-|\sigma|} ((\id_{C^{-*}(X;\Z)})_{-|\sigma|})_{\tau,\sigma}\otimes_\Z (d_{C(\sigma)})_{n+|\sigma|},\end{array}}{\begin{array}{l} ((d_{C_*(X;\Z)})_{|\sigma|})_{\tau,\sigma}\otimes_\Z C(\tau\to\sigma)_{n-|\sigma|} \\ \quad + (-1)^{|\sigma|} ((\id_{C_*(X;\Z)})_{|\sigma|})_{\tau,\sigma}\otimes_\Z (d_{C(\sigma)})_{n-|\sigma|},\end{array}}\]
for $C\in \brc{\BB(\A)^*[X],}{\BB(\A)_*[X].}$

Morphisms are shifted similarly:
\[ (\mathrm{sh}(f)_{n})_{\tau,\sigma} = \brcc{((\id_{C^{-*}(X;\Z)})_{-|\sigma|})_{\tau,\sigma}\otimes_\Z f(\sigma)_{n+|\sigma|},}{f\in \mathrm{Mor}(\BB(\A)^*[X]),}{((\id_{C_*(X;\Z)})_{|\sigma|})_{\tau,\sigma}\otimes_\Z f(\sigma)_{n-|\sigma|},}{f\in \mathrm{Mor}(\BB(\A)^*[X]).}\]
Comparing these formulae to Definition $4.4$ of \cite{Ranicki(1992)} we see that the shift functors are precisely the same as Ranicki's covariant assembly functors in the case that $X$ is a simplicial complex.\footnote{Modulo possibly a different sign convention for the differential of the total complex of a double complex.} Combining this with Example \ref{ex:tensorWithUnderlineZIsId} recovers the statement in \cite[Example 4.5]{Ranicki(1992)} that the covariant assembly of $\underline{\Z}$ is $C(X)$
\end{ex}

\section{Chain duality on categories over ball complexes} \label{sec:chain-duality-on-cats-over-ball-cplxs-saf}
Let $X$ be a ball complex, $\A$ an additive category and $(T_\A,e_\A)$ a chain duality on $\A$. We follow the approach in section \ref{sec:chain-duality} to define a chain duality on $\A_*(X)$ and $\A^*(X)$ using $(T_\A,e_\A)$.

The following definition can be seen to agree with that of Proposition $5.1$ of \cite{Ranicki(1992)} in the case that $X$ is a simplicial complex.
\begin{defn}\label{defn:XctrldT}
Let $(T_\A,e_\A)$ be a chain duality on an additive category $\A$. Define the contravariant functor $T_{\A(X)}: \A(X)\to \BB(\A(X))$ to be the composition
\[ T_{\A(X)}= \mathrm{sh}\circ (T_\A)_* \circ \sI_{X,\A}\]where $(T_\A)_*$ is the push-forward functor as defined in Definition \ref{defn:pushforwardofsquarebracketcategories}.
\end{defn}

\begin{ex}\label{ex:TAXCWhenCSupportedOnSigma}
Let $X$ be a ball complex and let $\rho\in X$ be fixed. Suppose $C\in \brc{\BB(\A^*(X))}{\BB(\A_*(X))}$ is such that $C(\tau)\neq 0$ if and only if $\tau=\rho$. Then unpacking Definition \ref{defn:XctrldT} and using Example \ref{ex:whatIsIOfDualCellsEx} it follows that
\begin{align}
 T_{\BB(\A(X))}(C)(\sigma) &= \brc{C_*(\sigma,\partial\sigma;\Z)\otimes_\Z T_\BB((I_{X,\A})_\BB(C(\sigma)))}{C^{-*}(\sigma,\partial\sigma;\Z)\otimes_\Z T_\BB((I_{X,\A})_\BB(C(\sigma)))}\notag\\
&= \brc{\Sigma^{|\sigma|}T_\BB(C|_\sigma)}{\Sigma^{-|\sigma|}T_\BB(C|_{\mathrm{st}(\sigma)})}\notag\\
&= \brcc{\Sigma^{|\sigma|}T_\BB(C(\rho)),}{\mathrm{if}\;\sigma\geqslant\rho,}{\Sigma^{-|\sigma|}T_\BB(C(\rho)),}{\mathrm{if}\;\sigma\leqslant\rho,}\notag
\end{align}
and $0$ otherwise. 
\end{ex}

\begin{ex}\label{ex:WhatAreTDAndTDStar}
For the ring of integers $R=\Z$ let $(T,e)$ be the chain duality on $\A(\Z)$ defined in Example \ref{ex:ringsMotivateGeneralCase}. Let $D\in \BB(\A(\Z)_*(X))$ and $D^*\in \BB(\A(\Z)^*(X))$ be as in Examples \ref{ex:subdivsAndDualCells} and \ref{ex:whatIsIOfDualCellsEx}.

By Example \ref{ex:whatIsIOfDualCellsEx} we have that 
\begin{align}
(\sI_{X,\A})_\BB(D)(\sigma)&= C_*(D(\sigma,X);\Z),\notag\\
(\sI_{X,\A})_\BB(D^*)(\sigma)&= C^{-*}(N(\sigma^\prime,X^\prime);\Z)\notag
\end{align}
for all $\sigma\in X$. Therefore it follows that 
\begin{align}
T_{\BB(\A(\Z)_*(X))}(D)(\sigma) &= \Sigma^{-|\sigma|}T(C_*(D(\sigma,X);\Z))\notag\\
&= C^{-|\sigma|-*}(D(\sigma,X);\Z).\notag
\end{align}
Similarly, we have 
\begin{align}
T_{\BB(\A(\Z)^*(X))}(D^*)(\sigma) &= \Sigma^{|\sigma|}T(C^{-*}(N(\sigma^\prime, X^\prime);\Z))\notag\\
&= \Sigma^{|\sigma|}T^2(C_*(N(\sigma^\prime, X^\prime);\Z))\notag\\
&\cong C_{-|\sigma|+*}(N(\sigma^\prime, X^\prime);\Z).\notag
\end{align}
The last chain isomorphism is provided by $\Sigma^{|\sigma|}e(C_*(N(\sigma^\prime, X^\prime);\Z))$.
\end{ex}

In $(\ref{eqn:tensorOverA})$ a tensor product is defined for an additive category $\A$ with chain duality $(T_\A,e_\A)$ where, for objects $M,N\in \A$, $M\otimes_\A N$ is just a chain complex of Abelian groups. This definition only requires the contravariant functor part of the chain duality. Applying this to the categories $\A(X)$ and the contravariant functors $T_{\A(X)}$ of Definition \ref{defn:XctrldT} gives a tensor product over $\A(X)$. In the following it will be necessary to use a refinement of this tensor product where the tensor product of two objects $M,N\in \A(X)$ is not just a chain complex of Abelian groups but rather is fragmented over the ball complex $X$ in the sense of the previous section.   

\begin{defn}\label{defn:tensoroverAofX}
The contravariant functors $T_{\A(X)}:\A(X)\to \BB(\A(X))$ and their extensions $T_{\BB(\A(X))}=(T_{\A(X)})_\BB:\BB(\A(X))\to \BB(\A(X))$ are used to define tensor product functors
\begin{align}
 -\otimes_{\A^*(X)}-= \Hom_{\A^*(X)}(T_{\BB^*(\A(X))}(-),-):\BB(\A^*(X))\times\BB(\A^*(X)) &\to \BB(\Z_*(X)),\notag\\
 -\otimes_{\A_*(X)}-= \Hom_{\A_*(X)}(T_{\BB_*(\A(X))}(-),-):\BB(\A_*(X))\times\BB(\A_*(X)) &\to \BB(\Z^*(X)),\notag
\end{align}
by
\begin{align}
 (C\otimes_{\A^*(X)}D)_n(\sigma) &= \Hom_{\A}(T_{\BB^*(\A(X))}(C)(\sigma),(\sI_{X,\A})_\BB(D)(\sigma))_n,\notag \\
 (d_{C\otimes_{\A^*(X)}D})_{n,\tau,\sigma} &= (-1)^{n-1}(d_{T_{\BB^*(\A(X))}(C)})_{\sigma,\tau}^*((\sI_{X,\A})_\BB(D)(\tau\to\sigma))_*\notag\\
 &\quad + (\id_{T_{\BB^*(\A(X))}(C)})_{\sigma,\tau}^*((\sI_{X,\A})_\BB(d_D)(\sigma))_*,\notag\\
 (f\otimes_{\A^*(X)}f^\prime)_{n,\tau,\sigma} &= T_{\BB^*(\A(X))}(f)_{\sigma,\tau}^*((\sI_{X,\A})_\BB(f^\prime)(\sigma))_*\notag
\end{align}
and
\begin{align}
 (C\otimes_{\A_*(X)}D)_n(\sigma) &= \Hom_{\A}(T_{\BB_*(\A(X))}(C)(\sigma),(\sI_{X,\A})_\BB(D)(\sigma))_n,\notag \\
 (d_{C\otimes_{\A_*(X)}D})_{n,\tau,\sigma} &= (-1)^{n-1}(d_{T_{\BB_*(\A(X))}(C)})_{\sigma,\tau}^*((\sI_{X,\A})_\BB(D)(\sigma\to\tau))_*\notag\\
 &\quad + (\id_{T_{\BB_*(\A(X))}(C)})_{\sigma,\tau}^*((\sI_{X,\A})_\BB(d_D)(\sigma))_*,\notag\\
 (f\otimes_{\A_*(X)}f^\prime)_{n,\tau,\sigma} &= T_{\BB_*(\A(X))}(f)_{\sigma,\tau}^*((\sI_{X,\A})_\BB(f^\prime)(\sigma))_*\notag
\end{align}
\end{defn}

\begin{rmk}
The formulae of Definition \ref{defn:tensoroverAofX} are precisely what you get by replacing $\A$ with $\A(X)$ in $(\ref{eqn:tensorOverA})$ and grouping things accordingly, i.e. the tensor products $-\otimes_{\A(X)}-$ of Definition \ref{defn:tensoroverAofX} are refinements which assemble to give the tensor products of $(\ref{eqn:tensorOverA})$. 
\end{rmk}

The following Proposition applied to objects is stated in the proof of \cite[Proposition $5.1$]{Ranicki(1992)}. Due to different sign conventions this is stated as an equality in \cite{Ranicki(1992)}. This Proposition also indicates the reason for wanting to work with the refined tensor products of Definition \ref{defn:tensoroverAofX}.
\begin{prop}\label{prop:identifytensorprodfunctors}
There are natural isomorphisms of functors
\begin{align}
-\otimes_{\A^*(X)}- &\Rightarrow \mathrm{sh}\circ(-\otimes_\A-)\circ ((\sI_{X,\A})_\BB\times(\sI_{X,\A})_\BB)\notag\\ 
-\otimes_{\A_*(X)}- &\Rightarrow \mathrm{sh}\circ(-\otimes_\A-)\circ ((\sI_{X,\A})_\BB\times(\sI_{X,\A})_\BB).\notag 
\end{align}
\end{prop}
\begin{proof}
It can be shown that there are isomorphisms
\[\Phi_{C,D}:(C\otimes_{\A(X)} D)_n \cong (\mathrm{sh}((\sI_{X,\A})_\BB(C)\otimes_\A (\sI_{X,\A})_\BB(D)))_n\]
for all $C,D\in \BB(\A(X))$. The differentials of source and target agree up to a sign which can be compensated for by choosing the correct signs for the components of $\Phi_{C,D}$. Modulo this isomorphism we have equality
\[ f\otimes_{\A(X)} f^\prime = \mathrm{sh}((\sI_{X,\A})_\BB(f)\otimes_\A(\sI_{X,\A})_\BB(f^\prime))\]for all morphisms $f,f^\prime$ in $\BB(\A(X))$. This proves naturality.
\end{proof}

\begin{prop}\label{prop:natIsoForSimpCats}
There is a natural isomorphism 
\[ T_{-,-}:-\otimes_{\A(X)}- \Rightarrow (-\otimes_{\A(X)}-)\circ \mathcal{S}_{\BB(\A(X))}\]with $T_{C,D}:C\otimes_{\A(X)}D\stackrel{\cong}{\longrightarrow} D\otimes_{\A(X)} C$ defined as the composition
\[T_{C,D}= \Phi_{D,C}^{-1}\circ \mathrm{sh}(T_{(\sI_{X,\A})_\BB(C),(\sI_{X,\A})_\BB(D)})\circ \Phi_{C,D}\]
where $\Phi_{C,D}$ and $\Phi_{D,C}^{-1}$ are isomorphisms as in the proof of Proposition \ref{prop:identifytensorprodfunctors} and $T_{(\sI_{X,\A})_\BB(C),(\sI_{X,\A})_\BB(D)}$ is as defined in Proposition \ref{prop:functorcatdualityisos}. 

As $T_{(\sI_{X,\A})_\BB(D),(\sI_{X,\A})_\BB(C)}=T_{(\sI_{X,\A})_\BB(C),(\sI_{X,\A})_\BB(D)}^{-1}$ for all $C,D\in \BB(\A(X))$ it follows that $T_{D,C}=T_{C,D}^{-1}$ for all $C,D\in \BB(\A(X))$.
\end{prop}

\begin{proof}
For all $C,D\in \BB(\A(X))$, \[T_{C,D}:C\otimes_{\A(X)}D\stackrel{\cong}{\longrightarrow} D\otimes_{\A(X)} C\] is a chain isomorphism as it is defined as the composition of three chain isomorphisms. The fact that $T_{-,-}$ is a natural transformation follows from having natural transformations in Propositions \ref{prop:functorcatdualityisos} and \ref{prop:identifytensorprodfunctors}. 
\end{proof}

\begin{prop}\label{prop:defnOfeAX}
Let $e_{\BB(\A(X))}(C)= T_{C,T_{\BB(\A(X))}(C)}(\id_{T_{\BB(\A(X))}(C)})$, for all $C\in \BB(\A(X))$. Then
\begin{enumerate}
 \item this defines a natural transformation\[e_{\BB(\A(X))}: T_{\BB(\A(X))}^2 \Rightarrow \id_{\BB(\A(X))},\]
 \item $T_{C,D}(\phi) = e_{\BB(\A(X))}(C)\circ T_{\BB(\A(X))}(\phi)$, for all chain maps $\phi:T_{\BB(\A(X))}(C)\to D$,
 \item $e_{\BB(\A(X))}(T_{\BB(\A(X))}(C))\circ T_{\BB(\A(X))}(e_{\BB(\A(X))}(C)) = \id_{T_{\BB(\A(X))}(C)}$, for all $C\in \BB(\A(X))$.
\end{enumerate}
\end{prop}
\begin{proof}
This is an immediate consequence of Proposition \ref{prop:eTTeFormulaFromSwitchNatIso} applied to $\A=\A(X)$ which may be applied since $T_{D,C}=T_{C,D}^{-1}$, for all $C$,$D\in \BB(\A(X))$. The proof of Proposition \ref{prop:eTTeFormulaFromSwitchNatIso} is categorical and does not depend on the codomain $\BB(\mathrm{Ab})$ of $-\otimes_\A-$. Therefore Proposition \ref{prop:eTTeFormulaFromSwitchNatIso} still holds for $\A=\A(X)$ using the refined tensor products of Definition \ref{defn:tensoroverAofX}.
\end{proof}

The rest of the section is devoted to proving that $e_{\BB(\A(X))}(C):T_{\BB(\A(X))}^2(C)\simeq C$, for all $C\in\BB(\A(X))$. By Proposition \ref{prop:chcont} it is sufficient to show that
\[e_{\BB(\A(X))}(C)_{\sigma,\sigma}:T_{\BB(\A(X))}^2(C)(\sigma)\to C(\sigma)\]is a chain equivalence in $\A$, for all $\sigma\in X$.

\begin{prop}\label{prop:decomposesIntoProjAndeBB}
Let $e_{\BB(\A(X))}: T_{\BB(\A(X))}^2 \Rightarrow \id_{\BB(\A(X))}$ be as defined in Proposition \ref{prop:defnOfeAX}. Then, for all $C\in \BB(\A(X))$ and all $\sigma\in X$, the map \[e_{\BB(\A(X))}(C)_{\sigma,\sigma}: T_{\BB(\A(X))}^2(C)(\sigma)\to C(\sigma)\]is the composition of a signed projection map
\[ T_{\BB(\A(X))}^2(C)(\sigma) \twoheadrightarrow T_{\BB}^2(C(\sigma))\]and 
\[e_\BB(C(\sigma)): T_{\BB}^2(C(\sigma)) \to C(\sigma).\]
\end{prop}
\begin{proof}
Applying $T_{C,D}= \Phi_{D,C}^{-1}\circ \mathrm{sh}(T_{(\sI_{X,\A})_\BB(C),(\sI_{X,\A})_\BB(D)})\circ \Phi_{C,D}$ in the case where $D=T_{\BB(\A(X))}(C)$ to 
\[\phi = \id_{T_{\BB(\A(X))}(C)}\in (C \otimes_{\A(X)} T_{\BB(\A(X))}(C))_0\]the components
\[e_{\BB(\A(X))}(C)(\sigma)_{p,q}^r:T_{\A(X)}(T_{\BB(\A(X))}(C)_p(\sigma))_q \to (\sI_{X,\A})_\BB(C)(\sigma)_r\]
of $e_{\BB(\A(X))}(C)=T_{C,T_{\BB(\A(X))}(C)}(\phi)$ can be calculated.

Using the signs for $\Phi_{C,D}$, $\Phi_{D,C}$ of Proposition \ref{prop:identifytensorprodfunctors} together with $(\ref{eqn:TBphiComponents1pt5})$ and Proposition \ref{prop:functorcatdualityisos} the components $e_{\BB(\A(X))}(C)(\sigma)_{p,q}^r$ are computed to be
\begin{equation}\label{eqn:cptsOfeBSigmaSigma}
(-1)^{|\sigma|(q-|\sigma|)}(-1)^{\frac{1}{2}|\sigma|(|\sigma|-1)}e_\A((\sI_{X,\A})_\BB(C)(\sigma)_r)_{q-|\sigma|,q-|\sigma|}\circ T_\A(\phi(\sigma)_{r,q}^p)_{q-|\sigma|} 
\end{equation}
for all $p-q+r=0$ where $\phi(\sigma)_{r,q}^p$ is the inclusion map
\[T_{\BB(\A(X))}(C_r)_q(\sigma)\hookrightarrow (\sI_{X,\A})_\BB(T_{\BB(\A(X))}(C)_p)(\sigma).\]
Examining $(\ref{eqn:cptsOfeBSigmaSigma})$ and using additivity of $e_\A$ it follows that
\[ e_{\BB(\A(X))}(C)_{\sigma,\sigma} = \pm e_\BB(C(\sigma))\circ \mathrm{proj.}: T_{\BB(\A(X))}^2(C)(\sigma) \to T_\BB^2(C(\sigma))\to C(\sigma)\]
as required.
\end{proof}
Consequently, since $e_\BB(C(\sigma)): T_{\BB}^2(C(\sigma)) \to C(\sigma)$ is a chain equivalence in $\A$, for all $\sigma\in X$, it is now sufficient to prove that the projection map 
\[ T_{\BB(\A(X))}^2(C)(\sigma) \twoheadrightarrow T_{\BB}^2(C(\sigma))\]of Proposition \ref{prop:decomposesIntoProjAndeBB} is a chain equivalence in $\A$. The signed projection map is a chain equivalence if and only if the unsigned projection map is; we prove this but must first make the following definitions.


\begin{defn}\label{defn:TheDsigmaFunctors}
For all $\sigma\in X$, define $\mathcal{D}_*^\sigma:X\to \BB(\A(\Z))$ in $\BB(\A(\Z))^*[X]$ by
\begin{align}
 \mathcal{D}^\sigma_*(\tau) &= C_*([\tau:\sigma];\Z),\notag\\
 \mathcal{D}^\sigma_*(\rho\to\tau) &= \mathrm{restriction}: C_*([\rho:\sigma];\Z)\to C_*([\tau:\sigma];\Z).\notag
\end{align}
 Similarly define $\mathcal{D}_\sigma^{-*}:X\to \BB(\A(\Z))$ in $\BB(\A(\Z))_*[X]$ by
\begin{align}
 \mathcal{D}_\sigma^{-*}(\tau) &= C^{-*}([\sigma:\tau] ;\Z),\notag\\
 \mathcal{D}_\sigma^{-*}(\rho\to\tau) &= \mathrm{restriction}: C^{-*}([\sigma:\tau];\Z)\to C^{-*}([\sigma:\rho];\Z).\notag
\end{align}
\end{defn}

\begin{defn}\label{defn:suspendedZoverAballFunctors}
For any $C\in\BB(\A)$, define the functor $C_\sigma:X\to \BB(\A)$ by
\begin{align}
 C_\sigma(\tau) &= \delta_{\tau\sigma} C(\sigma),\notag\\
  C_\sigma(\rho\to\tau) &= \delta_{\rho\sigma} \delta_{\tau\sigma} \id_{C(\sigma)},\notag
\end{align}
where $\delta_{\tau\sigma}$ and $\delta_{\rho\sigma}$ are Kronecker $\delta$ functions. Note that $C_\sigma$ is in both $\BB(\A)^*[X]$ and $\BB(\A)_*[X]$ as it is supported on a single ball $\sigma\in X$.
\end{defn}

\begin{prop}\label{prop:DsigmanatualisotoZSuspended}
For all $\sigma\in X$, there are natural equivalences of functors 
\[e:\mathcal{D}_*^\sigma \Rightarrow (\Sigma^{|\sigma|}\Z)_\sigma,\quad e^*:\mathcal{D}_\sigma^{-*} \Rightarrow (\Sigma^{-|\sigma|}\Z)_\sigma\]in $\BB(\A(\Z))^*[X]$ and $\BB(\A(\Z))_*[X]$ respectively.
\end{prop}
\begin{proof}
By Proposition \ref{prop:cellChainsOfRhoColonSigmaCtble} there are chain contractions \[P_\tau: \mathcal{D}_*^\sigma(\tau) \simeq 0,\quad Q_\tau : \mathcal{D}_\sigma^{-*}(\tau)\simeq 0,\]for all $\tau\neq \sigma$ and 
\[\mathcal{D}_*^\sigma(\sigma) = \Sigma^{|\sigma|}\Z,\quad \mathcal{D}_\sigma^{-*}(\sigma) = \Sigma^{-|\sigma|}\Z.\]Define the natural transformations $e$ and $e^*$ by
\begin{align}
e(\tau)   &= \delta_{\tau\sigma}\id_{\Sigma^{|\sigma|}\Z},\notag\\ 
e^*(\tau) &= \delta_{\tau\sigma}\id_{\Sigma^{-|\sigma|}\Z}.\notag
\end{align}
Checking these are natural transformations is straightforward and since
\begin{align}
e(\tau): \mathcal{D}_*^\sigma(\tau)&\simeq (\Sigma^{|\sigma|}\Z)_\sigma(\tau),\notag\\
e^*(\tau): \mathcal{D}_\sigma^{-*}(\tau)&\simeq (\Sigma^{-|\sigma|}\Z)_\sigma(\tau)\notag
\end{align}
are chain equivalences for all $\tau\in X$ we see that $e$ and $e^*$ are natural equivalences.
\end{proof}

\begin{prop}\label{prop:projMapIsChainEquiv}
The projection maps 
\[ T_{\BB(\A(X))}^2(C)(\sigma)\twoheadrightarrow T_{\BB}^2(C(\sigma))\]
of Proposition \ref{prop:decomposesIntoProjAndeBB} are chain equivalences.
\end{prop}
\begin{proof}
First observe that there are chain isomorphisms
\begin{align}
 T_{\BB(\A^*(X))}(C)|_\sigma &\stackrel{\cong}{\longrightarrow} \mathcal{D}_*^{\sigma}\otimes \mathcal{T}_\BB(C)\notag\\
 T_{\BB(\A_*(X))}(C)|_{\mathrm{st}(\sigma)} &\stackrel{\cong}{\longrightarrow} \mathcal{D}^{-*}_{\sigma}\otimes \mathcal{T}_\BB(C)\notag
\end{align}
in $\mathbb{G}_X(\A)$ given by redistributing summands on the left to different balls and redistributing morphisms to go between the new locations of the summands. For example,
\[ (T_{\BB(\A^*(X))}(C)|_\sigma)_n(\tau) = \sum_{\rho\leqslant\tau\leqslant\sigma}T_{\BB}(C(\rho))_{n-|\tau|}\]whereas
\begin{align}
 (\mathcal{D}_*^\sigma\otimes\mathcal{T}_\BB(C))_n(\rho) &= (C_*(\mathrm{st}(\rho)\cap\sigma;\Z)\otimes_\Z T_\BB(C(\rho)))_n\notag\\
 &= \sum_{\rho\leqslant\tau\leqslant\sigma}C_{|\tau|}(\tau,\partial\tau;\Z)\otimes_\Z T_\BB(C(\rho))_{n-|\tau|}\notag\\
 &= \sum_{\rho\leqslant\tau\leqslant\sigma}T_\BB(C(\rho))_{n-|\tau|}.\notag
\end{align}
In this case the redistribution isomorphism takes the summand $T_{\BB}(C(\rho))_{n-|\tau|}$ in $(T_{\BB(\A^*(X))}(C)|_\sigma)_n$ associated to the ball $\tau$ and moves it to lie over $\rho$, which is where it is in $(\mathcal{D}_*^\sigma\otimes\mathcal{T}_\BB(C))_n$.

The natural equivalences of Proposition \ref{prop:DsigmanatualisotoZSuspended} induce chain equivalences
\begin{align}
 e\otimes \id_{\mathcal{T}_\BB(C)}: \mathcal{D}_*^\sigma\otimes \mathcal{T}_\BB(C) &\stackrel{\simeq}{\longrightarrow} (\Sigma^{|\sigma|}\Z)_\sigma\otimes \mathcal{T}_\BB(C)\notag\\
 e^*\otimes \id_{\mathcal{T}_\BB(C)}: \mathcal{D}^{-*}_\sigma\otimes \mathcal{T}_\BB(C) &\stackrel{\simeq}{\longrightarrow} (\Sigma^{-|\sigma|}\Z)_\sigma\otimes \mathcal{T}_\BB(C)\notag
\end{align}
in $\A(X)$. Further, since the chain equivalences are the identity map over $\sigma\in X$ and map to zero elsewhere they are projections onto the components supported on $\sigma$. The chain homotopy inverses are correspondingly inclusions of the components supported on $\sigma$.

Assembling the above chain isomorphisms and chain equivalences gives chain equivalences in $\BB(\A)$:
\begin{align}
\mathrm{Ass}(T_{\BB(\A^*(X))}(C)|_\sigma) &\stackrel{\cong}{\longrightarrow} \mathrm{Ass}(\mathcal{D}_*^\sigma\otimes \mathcal{T}_\BB(C)) \notag\\
&\stackrel{\simeq}{\longrightarrow} \mathrm{Ass}((\Sigma^{|\sigma|}\Z)_\sigma\otimes \mathcal{T}_\BB(C)) = \Sigma^{|\sigma|}T_\BB(C(\sigma))\notag\\
\mathrm{Ass}(T_{\BB(\A_*(X))}(C)|_\sigma) &\stackrel{\cong}{\longrightarrow} \mathrm{Ass}(\mathcal{D}^{-*}_\sigma\otimes \mathcal{T}_\BB(C)) \notag\\
&\stackrel{\simeq}{\longrightarrow} \mathrm{Ass}((\Sigma^{-|\sigma|}\Z)_\sigma\otimes \mathcal{T}_\BB(C)) = \Sigma^{-|\sigma|}T_\BB(C(\sigma))\notag
\end{align}
which are also projections onto the component that used to be supported on $\sigma$ before assembly. Note that the chain homotopy inverses are the inclusions
\begin{align}
\Sigma^{|\sigma|}T_\BB(C(\sigma)) \hookrightarrow \mathrm{Ass}(T_{\BB(\A^*(X))}(C)|_\sigma) \notag\\ 
\Sigma^{-|\sigma|}T_\BB(C(\sigma)) \hookrightarrow \mathrm{Ass}(T_{\BB(\A_*(X))}(C)|_{\mathrm{st}(\sigma)}). \notag
\end{align}

Applying $\Sigma^{|\sigma|}T_\BB(-)$ and $\Sigma^{-|\sigma|}T_\BB(-)$ to these inclusions give projection maps
\begin{align}
T_{\BB(\A^*(X))}^2(C)(\sigma) = \Sigma^{|\sigma|}T_\BB(T_{\BB(\A^*(X))}(C)|_\sigma) &\stackrel{\simeq}{\longrightarrow} \Sigma^{|\sigma|}T_\BB(\Sigma^{|\sigma|}T_\BB(C(\sigma)))\notag\\
T_{\BB(\A_*(X))}^2(C)(\sigma) = \Sigma^{-|\sigma|}T_\BB(T_{\BB(\A_*(X))}(C)|_{\mathrm{st}(\sigma)}) &\stackrel{\simeq}{\longrightarrow} \Sigma^{-|\sigma|}T_\BB(\Sigma^{-|\sigma|}T_\BB(C(\sigma)))\notag
\end{align}
which are also chain equivalences as $T_\BB$ sends chain equivalences to chain equivalences by Proposition \ref{prop:TBSendsChainEquivsToChainEquivs}.

For a contravariant additive functor $\sG: \BB(\A)\to \BB(\A)$ it can be easily checked that there are isomorphisms of functors
\[\Sigma^{n}\circ \sG \cong \sG\circ \Sigma^{-n},\]for all $n\in \Z$. Consequently we have isomorphisms
\begin{align}
 \Sigma^{|\sigma|}T_\BB(\Sigma^{|\sigma|}T_\BB(C(\sigma))) &\cong \Sigma^{|\sigma|}\Sigma^{-|\sigma|}T_\BB^2(C(\sigma)) =T_\BB^2(C(\sigma))\notag\\
 \Sigma^{-|\sigma|}T_\BB(\Sigma^{-|\sigma|}T_\BB(C(\sigma))) &\cong \Sigma^{-|\sigma|}\Sigma^{|\sigma|}T_\BB^2(C(\sigma)) =T_\BB^2(C(\sigma)).\notag
\end{align}

Composing the chain equivalences of all the previous steps shows that the projection maps 
\[ T_{\BB(\A(X))}^2(C)(\sigma)\to T_{\BB}^2(C(\sigma))\]
are chain equivalences as required.
\end{proof}

\begin{ex}\label{ex:redistIsoEtcForCSupportedOnSigma}
Suppose $C\in\BB^*(\A(X))$ is such that $C(\tau)\neq 0$ if and only if $\tau=\rho$. Then by Example \ref{ex:TAXCWhenCSupportedOnSigma}
\[ T_{\BB^*(\A(X))}(C)(\sigma) = \brcc{C_*(\sigma;\partial\sigma;\Z)\otimes_\Z T_\BB(C(\rho)),}{\sigma\geqslant\rho,}{0,}{\mathrm{otherwise.}}\]
The boundary maps are such that 
\begin{equation}\label{eqn:AssOne}
\mathrm{Ass}(T_{\BB^*(\A(X))}(C)) \cong C_*(\mathrm{st}(\rho);\Z)\otimes_\Z T_\BB(C(\rho)). 
\end{equation}
Restricting $T_{\BB^*(\A(X))}(C)$ to $\sigma$, for any $\sigma\geqslant\rho$, the isomorphism $(\ref{eqn:AssOne})$ restricts to
\begin{equation}\label{eqn:AssTwo}
\mathrm{Ass}(T_{\BB^*(\A(X))}(C)|_\sigma) \cong C_*([\rho:\sigma];\Z)\otimes_\Z T_\BB(C(\rho)) = \mathrm{Ass}(\mathcal{D}^\sigma_*\otimes \mathcal{T}_\BB(C)) 
\end{equation}
where the equality in $(\ref{eqn:AssTwo})$ follows from the fact that $(\mathcal{D}^\sigma_*\otimes \mathcal{T}_\BB(C))(\tau)$ is non-zero only for $\tau=\rho$ so that $(\mathcal{D}^\sigma_*\otimes \mathcal{T}_\BB(C))(\rho) = \mathrm{Ass}(\mathcal{D}^\sigma_*\otimes \mathcal{T}_\BB(C))$.

The isomorphism $(\ref{eqn:AssTwo})$ is the total assembly of the redistribution isomorphism which maps
\[C_*(\tau,\partial\tau;\Z)\otimes_\Z T_\BB(C(\rho)) = T_{\BB^*(\A(X))}(C)(\tau)\]for all $\rho\leqslant\tau\leqslant\sigma$ from the ball $\tau$ to the summand \[C_*(\tau,\partial\tau;\Z)\otimes_\Z T_\BB(C(\rho)) = C_*(\tau,\partial\tau;\Z)\otimes_\Z \mathcal{T}_\BB(C)(\rho)\]of \[C_*([\rho:\sigma];\Z)\otimes_\Z \mathcal{T}_\BB(C)(\rho) = (\mathcal{D}^\sigma_*\otimes \mathcal{T}_\BB(C))(\rho)\]which is associated to the ball $\rho$.

Next 
\begin{align}
 T_{\BB^*(\A(X))}^2(C)(\sigma) &= C_*(\sigma,\partial\sigma;\Z)\otimes_\Z T_\BB(\mathrm{Ass}(T_{\BB^*(\A(X))}(C)|_\sigma))\notag\\
 &\cong \Sigma^{|\sigma|}T_\BB(\mathrm{Ass}(\mathcal{D}^\sigma_*\otimes \mathcal{T}_\BB(C)))\notag\\
 &\simeq \Sigma^{|\sigma|}T_\BB(\mathrm{Ass}((\Sigma^{|\sigma|}\Z)_\sigma\otimes \mathcal{T}_\BB(C)))\notag\\
 &= \brcc{\Sigma^{|\sigma|}T_\BB(\Sigma^{|\sigma|}T_\BB(C(\rho)))),}{\sigma=\rho,}{0,}{\sigma\neq\rho.}\notag
\end{align}
The last equality is due to $(\Sigma^{|\sigma|}\Z)_\sigma$ being $0$ except over $\sigma$ and $\mathcal{T}_\BB(C)$ being $0$ except over $\rho$. Thus 
$(\Sigma^{|\sigma|}\Z)_\sigma\otimes \mathcal{T}_\BB(C)$ is $0$ except over $\sigma$ if $\sigma=\rho$ in which case it is equal to $\Sigma^{|\sigma|}T_\BB(C(\rho))$. Hence we have
\[ T_{\BB^*(\A(X))}^2(C)(\rho) \simeq \Sigma^{|\sigma|}T_\BB(\Sigma^{|\sigma|}T_\BB(C(\rho))))\cong T_\BB^2(C(\rho))\]and 
\[ T_{\BB^*(\A(X))}^2(C)(\sigma) \simeq 0 = T_\BB^2(C(\sigma))\]for $\sigma\neq\rho$. The $C\in \BB_*(\A(X))$ case is similar.
\end{ex}

\section{$L$-theory} \label{sec:L-theory}

In this section we quickly review the $L$-groups, the $L$-spaces and the $L$-spectra of additive categories with chain duality. The main sources are \cite{Ranicki(1992)} and \cite{Weiss(1992)}, see also \cite{Kuehl-Macko-Mole(2012)}. The symbol $W$ denotes the standard $\ZZ[\ZZ/2]$-resolution of $\ZZ$, $\Delta^{k}$ is the standard $k$-simplex and $\AA$ is an additive category with chain duality as in the above sources.

\begin{definition}
	\cite[Definition 1.6]{Ranicki(1992)}
	\begin{enumerate}
		\item An $n$-dimensional {\it symmetric chain complex} over $\AA$ is a pair $(C,\varphi)$ where $C$ is a chain complex in $\BB (\AA)$ and $\varphi$ is an $n$-dimensional cycle in $W^{\%} (C) = \Hom_{\ZZ[\ZZ/2]} (W,C \otimes_{\AA} C)$. It is called {\it Poincar\'e} if $\varphi_{0} \co \Sigma^{n}TC \ra C$ is a chain equivalence.
		\item An $n$-dimensional {\it quadratic chain complex} over $\AA$ is a pair $(C,\psi)$ where $C$ is a chain complex in $\BB (\AA)$ and $\psi$ is an $n$-dimensional cycle in $W_{\%} (C) = W \otimes_{\ZZ[\ZZ/2]} (C \otimes_{\AA} C)$. It is called {\it Poincar\'e} if $(1+T) \psi_{0} \co \Sigma^{n}TC \ra C$ is a chain equivalence.
	\end{enumerate}
\end{definition}

\begin{definition} \label{defn:L-groups}
	\cite[Definition 1.8]{Ranicki(1992)}
	\begin{enumerate}
		\item The $n$-dimensional symmetric $L$-group $L^{n} (\AA)$ is defined to be the cobordism group of $n$-dimensional symmetric Poincar\'e complexes $(C,\varphi)$.
		\item The $n$-dimensional quadratic $L$-group $L_{n} (\AA)$ is defined to be the cobordism group of $n$-dimensional quadratic Poincar\'e complexes $(C,\psi)$.
	\end{enumerate}
\end{definition}

\begin{definition} \label{defn:L-spaces}
	\cite[Definition 13.2]{Ranicki(1992)}
	\begin{enumerate}
		\item The $n$-th symmetric $L$-space $\bL^{n} (\AA)$ is defined to be the Kan $\Delta$-set whose $k$-simplices are $(n+k)$-dimensional symmetric Poincar\'e complexes $(C,\varphi)$ in $\AA^{\ast}(\Delta^{k})$.
		\item The $n$-th quadratic $L$-space $\bL_{n} (\AA)$ is defined to be the Kan $\Delta$-set whose $k$-simplices are $(n+k)$-dimensional quadratic Poincar\'e complexes $(C,\psi)$ in $\AA^{\ast}(\Delta^{k})$.
	\end{enumerate}
\end{definition}

We have by \cite[Proposition 13.4]{Ranicki(1992)} that
\[
\pi_{k} \bL^{n} (\AA) = L^{n+k} (\AA) \quad \textup{and} \quad \pi_{k} \bL_{n} (\AA) = L_{n+k} (\AA).
\]

\begin{definition} \label{defn:L-spectra}
	\cite[Definition 13.5]{Ranicki(1992)}
	\begin{enumerate}
		\item The symmetric $L$-spectrum $\bL^{\bullet} (\AA)$ is defined to be the $\Omega$-spectrum of Kan $\Delta$-sets whose $n$-th space is $\bL^{n} (\AA)$.
		\item The quadratic $L$-spectrum $\bL_{\bullet} (\AA)$ is defined to be the $\Omega$-spectrum of Kan $\Delta$-sets whose $n$-th space is $\bL_{n} (\AA)$.
	\end{enumerate}
\end{definition}

In the next section we review a generalisation of all these notions so that an algebraic bordism category can be taken as an input. 

\section{Homology theory} \label{sec:homology-theory}

In this section we show that the collection of functors
\[
X \mapsto L_{n} (\AA_{\ast}(X)) = \pi_{n} (\bL_{\bullet} (\AA_{\ast}(X)))
\]
defines a homology theory on the category $\strcellcat$ of structured cell complexes. Note that this functor factors through the forgetful functor $\strcellcat \ra \ballcat \ra \regCWcplxcat$ and so we only need to show that this assignment is a homology theory on the category $\regCWcplxcat$.

We closely follow \cite{Weiss(1992)}, where the same was proved with the category of finite $\Delta$-sets and inclusions as the source. For the sake of notation and completeness we reproduce the proofs here as well, noting that they still work in this more general setting almost word for word.

We need to prove homotopy invariance and excision. The existence of long exact sequences is automatic since the functors above factor through spectra. A use is made of localisation sequences in algebraic surgery which was formalised by Ranicki using the language of algebraic bordism categories \cite[Chapter 3]{Ranicki(1992)}.

\begin{definition} \cite[Definitions 3.2,3.1]{Ranicki(1992)}
	An {\it algebraic bordism category} $\Lambda = (\AA,\BB,\CC)$ is an additive category $\AA$ with chain duality $(T,e)$ together with a pair $(\BB,\CC \subseteq \BB)$ of closed subcategories of $\BB (\AA)$ such that for any object $B \in \BB$ the cone $\sC (\id_{B})$ is an object in $\CC$ and $e (B) \co T^{2} (B) \ra B$ is a $\CC$-equivalence.
	A subcategory $\CC \subseteq \BB (\AA)$ is {\it closed} if it is a full additive subcategory such that the mapping cone of any chain map in $\CC$ is in $\CC$.
\end{definition}

\begin{definition} \cite[Definition 3.4]{Ranicki(1992)}
	Let $\Lambda = (\AA,\BB,\CC)$ be an algebraic bordism category.
	\begin{enumerate}
		\item The $n$-dimensional symmetric $L$-group $L^{n} (\Lambda)$ is defined to be the cobordism group of $n$-dimensional symmetric complexes $(C,\varphi)$ such that $C \in \BB$ and $\del C = \Sigma^{-1} \sC (\varphi_{0}) \in \CC$.
		\item The $n$-dimensional quadratic $L$-group $L_{n} (\Lambda)$ is defined to be the cobordism group of $n$-dimensional quadratic complexes $(C,\psi)$ such that $C \in \BB$ and $\del C = \Sigma^{-1} \sC ((1+T) \psi_{0}) \in \CC$.
	\end{enumerate}
\end{definition}

In other words the $L$-group $L_{n} (\Lambda)$ is a cobordism group of quadratic chain complexes in $\BB$ which are Poincar\'e modulo $\CC$. Similarly to before we can define $L$-spaces and $L$-spectra which will have the Kan property, so their homotopy groups will be the $L$-groups.

	Let $\Lambda = (\AA,\BB,\CC)$ be an algebraic bordism category and let $(X,A)$ be a pair of regular $CW$-complexes. In this situation one can define several algebraic bordism categories with the underlying additive category with chain duality $\AA_{\ast} (X)$. First denote by $\BB_{\ast} (X) = \BB (\AA_{\ast} (X))$ and by $\CC_{\ast} (X) \subseteq \BB_{\ast} (X)$ the subcategory of chain complexes in $\AA_{\ast}(X)$ which are contractible over all cells in $X$ and by $\CC_{A} (X) \subseteq \BB_{\ast} (X)$ the subcategory of chain complexes in $\AA_{\ast}(X)$ which are contractible over cells outside $A$. Note that we obviously have $\CC_{\ast} (X) \subseteq \CC_{A} (X)$. Then we denote $\Lambda_{\ast} (X) = (\AA_{\ast} (X),\BB_{\ast} (X),\CC_{\ast} (X))$ and $\Lambda_{A} (X) = (\AA_{\ast} (X),\BB_{\ast} (X),\CC_{A} (X))$ and $\Lambda^{A}_{\ast} (X) = (\AA_{\ast} (X),\CC_{A} (X),\CC_{\ast} (X))$.

\begin{remark}
	In the interest of brevity we sometimes shorten the notation. For example, if the category $\CC$ consists of all contractible chain complexes in $\BB = \BB(\AA)$ we just write $L_{n} (\AA)$ which is consistent with the previous section. See also \cite[Example 3.17]{Ranicki(1992)}
\end{remark}

Given an additive category $\AA$ with chain duality $(T,e)$ and a triple of closed subcategories $\DD \subseteq \CC \subseteq \BB \subseteq \BB (\AA)$ we obtain a homotopy fibration sequence of spectra \cite[Proposition 13.11]{Ranicki(1992)}
\begin{equation} \label{eqn:htpy-fib-seq-of-alg-bord-cast}
	\bL_{\bullet} (\AA,\CC,\DD) \ra \bL_{\bullet} (\AA,\BB,\DD) \ra \bL_{\bullet} (\AA,\BB,\CC).
\end{equation}

We note that this allows us to describe the relative term of a map of $L$-spectra using cobordisms of a single object rather than cobordisms of pairs, see \cite[Proposition 3.9]{Ranicki(1992)}. Let $(X,A)$ be a pair of regular $CW$-complexes. Since we have a triple of subcategories $\CC_{\ast} (X) \subseteq \CC_{\AA} (X) \subseteq \BB_{\ast} (X)$ this yields a homotopy fibration sequence
\begin{equation} \label{eqn:loc-seq-for-pair}
	\bL_{\bullet} (\Lambda^{A}_{\ast} (X)) \ra \bL_{\bullet} (\Lambda_{\ast} (X)) \ra \bL_{\bullet} (\Lambda_{A} (X)).
\end{equation}

The inclusion $A \hookrightarrow X$ induces a homotopy equivalence of $\bL_{\bullet} (\AA_{\ast}(A))$ with the first term by a homotopy invariance argument which is identical with the proof of \cite[Proposition 4.7]{Ranicki(1992)}. Hence a posteriori the third term will give a description of $L$-homology for the pair $(X,A)$.

To prove excision means to show that a pushout square of ball complexes induces a homotopy pushout square of the associated $L$-theory spectra. In order to analyse this square it is convenient to understand the cofibre of the induced map of the inclusion of a $(k-1)$-skeleton $X^{(k-1)}$ into the $k$-skeleton $X^{(k)}$ of a given regular $CW$-complex $X$ better via the localisation sequence above.

\begin{lemma} \label{lem:cofiber-induced-map-inclusion-k-1-skeleton-to-k-skeleton} \textup{\cite[Lemma 3.1]{Weiss(1992)}}
	The cofibre of the inclusion map
	\[
	\bL_{\bullet} (\AA_{\ast} (X^{(k-1)})) \ra 	\bL_{\bullet} (\AA_{\ast} (X^{(k)}))
	\]
	is homotopy equivalent to
	\[
	\vee_{\sigma \in X[k]} \Sigma^{k} \bL_{\bullet} (\AA))
	\]
	where $X[k]$ denotes the set of $k$-cells of $X$.
\end{lemma}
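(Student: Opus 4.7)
The proof should proceed by a two-step reduction. The first step is to apply the localisation sequence \eqref{eqn:loc-seq-for-pair} to the pair $(X^{(k)},X^{(k-1)})$ of regular $CW$-complexes, which yields the homotopy fibration
\[
\bL_{\bullet}(\Lambda^{X^{(k-1)}}_{\ast}(X^{(k)})) \ra \bL_{\bullet}(\Lambda_{\ast}(X^{(k)})) \ra \bL_{\bullet}(\Lambda_{X^{(k-1)}}(X^{(k)})).
\]
By the homotopy invariance argument alluded to immediately after \eqref{eqn:loc-seq-for-pair}, the inclusion $X^{(k-1)} \hookrightarrow X^{(k)}$ induces a homotopy equivalence of $\bL_{\bullet}(\AA_{\ast}(X^{(k-1)}))$ with the leftmost term, while the middle term is $\bL_{\bullet}(\AA_{\ast}(X^{(k)}))$ by definition. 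The cofibre of the inclusion map in the lemma is therefore identified with $\bL_{\bullet}(\Lambda_{X^{(k-1)}}(X^{(k)}))$, and it suffices to produce a homotopy equivalence of this spectrum with $\bigvee_{\sigma \in X[k]} \Sigma^{k}\bL_{\bullet}(\AA)$.

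The second step exploits the fact that the $k$-cells of $X$ are maximal in the poset $X^{(k)}$, so any two distinct $k$-cells $\sigma, \sigma' \in X[k]$ have no common face outside $X^{(k-1)}$; the subcategory of $\AA_{\ast}(X^{(k)})$ consisting of objects supported on $X[k]$ decomposes as a direct sum indexed by $X[k]$. For a chain complex $C \in \BB(\AA_{\ast}(X^{(k)}))$ concentrated on a single $k$-cell $\sigma$, Example \ref{ex:TAXCWhenCSupportedOnSigma} yields $T_{\BB(\AA_{\ast}(X^{(k)}))}(C)(\sigma) = \Sigma^{-k}T_{\BB}(C(\sigma))$ with vanishing components elsewhere; consequently an $n$-dimensional quadratic Poincar\'e structure on $C$ in $\AA_{\ast}(X^{(k)})$ translates into an $(n{-}k)$-dimensional quadratic Poincar\'e structure on $C(\sigma) \in \BB(\AA)$, which accounts for the shift $\Sigma^{k}$ in each wedge summand. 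I would use this to define a natural $\Delta$-map
\[
\bigvee_{\sigma \in X[k]} \Sigma^{k}\bL_{\bullet}(\AA) \ra \bL_{\bullet}(\Lambda_{X^{(k-1)}}(X^{(k)}))
\]
by sending, on the $\sigma$-wedge summand, an $(n{-}k)$-dimensional quadratic Poincar\'e complex $(D,\psi)$ in $\AA$ to the complex in $\AA_{\ast}(X^{(k)})$ concentrated at $\sigma$ with value $D$, equipped with the induced $n$-dimensional structure.

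To verify that this map is a homotopy equivalence, I would check that it induces an isomorphism on all homotopy groups, i.e.\ on $L$-groups. Surjectivity and injectivity both follow from the claim that every $n$-dimensional quadratic Poincar\'e complex modulo $\CC_{X^{(k-1)}}(X^{(k)})$ is cobordant inside $\Lambda_{X^{(k-1)}}(X^{(k)})$ to one concentrated on $k$-cells: contributions living over cells of dimension $<k$ can be absorbed into the algebraic boundary, which by hypothesis lies in $\CC_{X^{(k-1)}}(X^{(k)})$ and hence is null in the relative theory. The main obstacle is precisely this collapse-and-surgery argument, carried out chain-level and for all $\Delta$-simplicial dimensions at once. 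This is where I expect Weiss's argument in the $\Delta$-set setting \cite[Lemma 3.1]{Weiss(1992)} to transfer essentially verbatim, since the only local fact needed is that the open star of a maximal cell in a regular $CW$-complex reduces to the cell itself, which is identical to the situation at a top simplex in a $\Delta$-set.
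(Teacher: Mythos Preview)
Your first step coincides with the paper's: both apply the localisation sequence \eqref{eqn:loc-seq-for-pair} to the pair $(X^{(k)},X^{(k-1)})$ and invoke the homotopy equivalence $\bL_{\bullet}(\AA_{\ast}(X^{(k-1)}))\simeq\bL_{\bullet}(\Lambda^{X^{(k-1)}}_{\ast}(X^{(k)}))$ to identify the cofibre with $\bL_{\bullet}(\Lambda_{X^{(k-1)}}(X^{(k)}))$.

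Your second step, however, goes in the opposite direction from the paper. You build an \emph{inclusion} map $\bigvee_{\sigma}\Sigma^{k}\bL_{\bullet}(\AA)\to\bL_{\bullet}(\Lambda_{X^{(k-1)}}(X^{(k)}))$ by concentrating a complex on a single $k$-cell, and then propose to check $\pi_{\ast}$-isomorphism via a collapse-and-surgery argument showing every class is cobordant to one supported on $k$-cells. The paper instead constructs an \emph{evaluation} map the other way: the functor $M\mapsto (M(\sigma))_{\sigma\in X[k]}$ from $\AA_{\ast}(X^{(k)})$ to $\prod_{\sigma}\AA$ is duality-preserving up to a $k$-fold shift, giving $\ev_{k}\co\bL_{\bullet}(\Lambda_{X^{(k-1)}}(X^{(k)}))\to\bL_{\bullet-k}(\prod_{\sigma}\AA)$; one then argues that $\ev_{k}$ is a Kan fibration with fibre $\bL_{\bullet}(\AA_{\ast}(X^{(k-1)}),\BB_{\ast}(X^{(k-1)}),\BB_{\ast}(X^{(k-1)}))\simeq\ast$, hence a homotopy equivalence, and finishes with the standard equivalence $\bigvee_{\sigma}\Sigma^{k}\bL_{\bullet}(\AA)\simeq\bL_{\bullet-k}(\prod_{\sigma}\AA)$.

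Both routes are legitimate, but note that the argument in \cite[Lemma 3.1]{Weiss(1992)} you appeal to at the end \emph{is} the Kan-fibration-with-contractible-fibre argument, not a cobordism/surgery argument; so ``transfers verbatim'' applies to the paper's approach rather than to yours. The Kan fibration trick packages the collapse step you describe (killing the part of $C$ over $X^{(k-1)}$) into the contractibility of the fibre, avoiding any explicit algebraic surgery. Your inclusion map is essentially the inverse equivalence, and your proposal would work, but carrying out the surjectivity/injectivity check directly requires producing explicit bordisms in $\Lambda_{X^{(k-1)}}(X^{(k)})$, which is more laborious than the fibration argument. A minor point: the dual $T_{\BB(\AA_{\ast}(X^{(k)}))}(C)$ of a complex concentrated on a $k$-cell $\sigma$ is supported on \emph{all} faces $\tau\leq\sigma$, not only on $\sigma$ (Example~\ref{ex:TAXCWhenCSupportedOnSigma}); this does not affect the Poincar\'e-modulo-$\CC_{X^{(k-1)}}$ condition, but your phrase ``vanishing components elsewhere'' is inaccurate as written.
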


\begin{proof}
	Abbreviate $\CC_{k-1} (X)= \CC_{X^{(k-1)}} (X)$ and $\Lambda_{k-1} (X)= \Lambda_{X^{(k-1)}} (X)$ and also $\Lambda^{k-1}_{\ast} (X)= \Lambda^{X^{(k-1)}}_{\ast} (X)$. As noted above it follows from \cite[Proposition 4.7 (ii)]{Ranicki(1992)} that we have a homotopy equivalence
	\begin{equation} \label{eqn:inclusion-k-1-into-E}
	\bL_{\bullet} (\Lambda_{\ast} (X^{(k-1)})) \ra \bL_{\bullet} (\Lambda^{k-1}_{\ast} (X^{(k)})).	
	\end{equation}
	Consider now the homotopy fibration sequence
	\begin{equation} \label{eqn:loc-seq-for-inclusion}
		\bL_{\bullet} (\Lambda^{k-1}_{\ast} (X^{(k)})) \ra \bL_{\bullet} (\Lambda_{\ast} (X^{(k)})) \ra \bL_{\bullet} (\Lambda_{k-1} (X^{(k)})).
	\end{equation}
	The evaluation map on $k$-cells produces a functor
	\[
	\AA_{\ast} (X^{(k)}) \ra \prod_{\sigma \in X[k]} \AA \quad ; \quad M \ra (M(\sigma))_{\sigma \in X[k]}.
	\]
	This is a functor of additive categories with chain duality up to a dimension shift and therefore we obtain for all $q \in \ZZ$ a map
	\[
	\ev_{k} \co \bL_{q} (\AA_{\ast}(X^{(k)})) \ra \bL_{q-k} (\prod_{\sigma \in X[k]} \AA).
	\]
	This map factors through
	\begin{equation} \label{eqn:ev_k}
	\ev_{k} \co \bL_{q} (\Lambda_{k-1} (X^{(k)})) \ra \bL_{q-k} (\prod_{\sigma \in X[k]} \AA).
	\end{equation}
	Now both the source and the target of $\ev_{k}$ are Kan $\Delta$-sets. By an argument similar to the proof of the Kan property one can show that the map $\ev_{k}$ is a Kan fibration. The fiber over $0$ is
	\[
	\bL_{q} (\AA_{\ast} (X^{(k-1)}),\BB_{\ast}(X^{(k-1)}),\BB_{\ast}(X^{(k-1)})) \simeq \ast
	\]
	and hence the map \eqref{eqn:ev_k} is a homotopy equivalence. Finally note that the inclusions of the factors induce another homotopy equivalence
	\begin{equation} \label{eqn:wedge-versus-product}
	\vee_{\sigma \in X[k]} \Sigma^{k} \bL_{\bullet} (\AA) \ra \bL_{q-k} (\prod_{\sigma \in X[k]} \AA).
	\end{equation}
	Putting together \eqref{eqn:inclusion-k-1-into-E}, \eqref{eqn:loc-seq-for-inclusion}, \eqref{eqn:ev_k} and \eqref{eqn:wedge-versus-product} yields the desired result.
\end{proof}

For homotopy invariance the same spectral sequence argument as in \cite[Corollary 3.2]{Weiss(1992)} can be used.

Finally we can reproduce the excision.

\begin{proposition} \label{prop:excision} \textup{\cite[Corollary 3.3]{Weiss(1992)}}
	The functor
	\[
	 X \mapsto \bL_{\bullet} (\AA_{\ast} (X))
	\]
	is excisive. That is, if $X_{1}$ and $X_{2}$ are subcomplexes of a finite regular $CW$-complex $X$ with the intersection $X_{0} = X_{1} \cap X_{2}$ then the square
	\[
	\xymatrix{
	\bL_{\bullet} (\AA_{\ast} (X_{0})) \ar[r] \ar[d] & \bL_{\bullet} (\AA_{\ast} (X_{1})) \ar[d] \\
	\bL_{\bullet} (\AA_{\ast} (X_{2})) \ar[r] & \bL_{\bullet} (\AA_{\ast} (X))
	}
	\]
	is a homotopy pushout square.
\end{proposition}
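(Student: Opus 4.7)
The plan is to proceed by induction on the skeletal dimension $k \geq -1$. Set $X_{i}^{(k)} := X_{i} \cap X^{(k)}$ for $i = 0, 1, 2$. The base case $k = -1$ is trivial, as all four ball complexes $X_{i}^{(-1)}$ are empty, so the associated spectra are contractible and the square is a homotopy pushout.

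For the inductive step, assume the square with $X_{i}^{(k-1)}$ in place of $X_{i}$ is a homotopy pushout. The inclusions $X_{i}^{(k-1)} \hookrightarrow X_{i}^{(k)}$ induce a natural map from the $(k-1)$-skeleton square to the $k$-skeleton square. Taking vertical homotopy cofibers of this map of squares and applying Lemma \ref{lem:cofiber-induced-map-inclusion-k-1-skeleton-to-k-skeleton} to each of the four inclusions identifies the cofiber square with
\[
\xymatrix@C=1cm{
\vee_{\sigma \in X_{0}[k]} \Sigma^{k} \bL_{\bullet} (\AA) \ar[r] \ar[d] & \vee_{\sigma \in X_{1}[k]} \Sigma^{k} \bL_{\bullet} (\AA) \ar[d] \\
\vee_{\sigma \in X_{2}[k]} \Sigma^{k} \bL_{\bullet} (\AA) \ar[r] & \vee_{\sigma \in X[k]} \Sigma^{k} \bL_{\bullet} (\AA).
}
\]
The excision hypotheses $X = X_{1} \cup X_{2}$ and $X_{0} = X_{1} \cap X_{2}$ yield $X[k] = X_{1}[k] \cup X_{2}[k]$ and $X_{0}[k] = X_{1}[k] \cap X_{2}[k]$ on the indexing sets of $k$-cells. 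Thus the indexing pushout of sets is $X[k]$, and the cofiber square above is a homotopy pushout, since wedges of suspended copies of $\bL_{\bullet}(\AA)$ over inclusions of indexing sets yield homotopy pushouts of spectra. Since pushouts of spectra commute with cofibers, the property of being a homotopy pushout satisfies two-out-of-three in a vertical cofiber sequence of squares; combining this with the inductive hypothesis, the $k$-skeleton square is a homotopy pushout. Finiteness of $X$ terminates the induction at some $k = \dim X$.

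The main obstacle is the naturality of the identifications provided by Lemma \ref{lem:cofiber-induced-map-inclusion-k-1-skeleton-to-k-skeleton} with respect to inclusions of subcomplexes, which is required to ensure that the cofiber diagram above actually commutes and that its corners identify as claimed. This reduces to checking that the evaluation-on-$k$-cells functors $\AA_{\ast}(X_{i}^{(k)}) \to \prod_{\sigma \in X_{i}[k]} \AA$ used in the proof of the lemma commute with the extension-by-zero functors $\AA_{\ast}(X_{i}^{(k)}) \to \AA_{\ast}(X_{j}^{(k)})$ induced by inclusions $X_{i} \subseteq X_{j}$, and that the homotopy equivalences of \eqref{eqn:inclusion-k-1-into-E}, \eqref{eqn:ev_k} and \eqref{eqn:wedge-versus-product} are natural in the pair. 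Since all of these constructions are defined cellwise, the required compatibilities are routine, but must be unpacked carefully when writing up the full proof.
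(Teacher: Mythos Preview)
Your proof is correct and follows essentially the same approach as the paper: induct on the skeletal filtration, use Lemma~\ref{lem:cofiber-induced-map-inclusion-k-1-skeleton-to-k-skeleton} to identify the cofibre square as a wedge square indexed by $k$-cells, observe that this is a homotopy pushout because $X[k]=X_1[k]\cup X_2[k]$ and $X_0[k]=X_1[k]\cap X_2[k]$, and conclude by two-out-of-three. The paper's proof is considerably more terse (it simply defines $G_k(Y)=\bL_\bullet(\AA_*(Y^{(k)}))$ and asserts the induction), so your explicit treatment of the base case, the cofibre square, and the naturality of the identifications in Lemma~\ref{lem:cofiber-induced-map-inclusion-k-1-skeleton-to-k-skeleton} is a genuine improvement in exposition, not a departure in strategy.
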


\begin{proof}
	For a finite regular $CW$-complex $Y$ denote $G_{k} (Y) = \bL_{\bullet} (\AA_{\ast} (Y^{(k)}))$. By Lemma~\ref{lem:cofiber-induced-map-inclusion-k-1-skeleton-to-k-skeleton} the functor
	\[
	Y \ra G_{k} (Y) / G_{k-1} (Y)
	\]
	sends the square under consideration to a homotopy pushout square. By induction the same is true for the functor $G_{k} (-)$. If $k \geq \dim (X)$ we obtain the desired result.
\end{proof}

We conclude the section by unravelling what it means to have an element in $L$-homology of a ball complex $X$ and of a pair of ball complexes $(X,A)$. It is in fact very similar to what we have when $X$ is a simplicial complex as explained in \cite[Example 12.9]{Ranicki(1992)}. Hence an element $[(C,\psi)]$ in $L_{n} (\AA_{\ast} (X))$ is represented by a compatible collection of $(n-|\sigma|)$-dimensional quadratic Poincar\'e chain $(m-|\sigma|)$-ads
\begin{equation} \label{eqn:cycle-in-L-hlgy}
\sigma \mapsto (C(\sigma),\psi(\sigma)),	
\end{equation}
that means $(C(\sigma),\psi(\sigma)) \in \bL_{n-m}^{(m-|\sigma|)} (\AA)$ for some $m \in \ZZ$.

For a pair $(X,A)$ we obtain the same except we allow that $(C(\sigma),\psi(\sigma))$ is not necessarily Poincar\'e if $\sigma \in A$, see \eqref{eqn:loc-seq-for-pair}.

\begin{remark} \label{rem:connective-versions}
	For the sake of clarity we did not introduce connective versions of the algebraic bordism categories as in \cite[Chapter 15]{Ranicki(1992)}. However, the proofs would work just as well in those cases.
\end{remark}

\begin{remark} \label{rem:notation-Lambda}
	In the introduction we also used the notation $\Lambda^{c}_{\ast} (X)$ and $\Lambda^{\ast} (X)$ for certain algebraic bordism categories following partially \cite{Ranicki(1992)}. Let us explain here that $\Lambda^{c}_{\ast} (X)$ means the same as $\Lambda_{\ast} (X)$ together with the assumption that the chain complexes are globally contractible (that means contractible after the assembly \cite[Chapter 9]{Ranicki(1992)}). The category $\Lambda^{\ast} (X)$ is defined just like $\Lambda_{\ast} (X)$ except that the underlying additive category with chain duality is the category $\AA^{\ast} (X)$. For the sake of completeness we mention the related category $\Lambda (X)$ from \cite{Ranicki(1992)} which consists of all chain complexes in $\AA_{\ast} (X)$ which are globally Poincar\'e. Its $L$-theory is not a homology theory.
\end{remark} 

\section{Signatures} \label{sec:signatures}

In this section we review how to obtain elements of various $L$-groups from geometric situations. This is a straightforward generalisation from simplicial complexes to structured cell complexes of what was done in \cite{Ranicki(1992)} and \cite{Kuehl-Macko-Mole(2012)}. The main idea is to make maps transverse to dual cells. This can be done since for a structured cell complex $X$ each dual cell $D(\sigma,X)$ has a trivial normal PL-bundle with the fibre given by the ball $\sigma$ itself.

Let us first recall signatures over group rings. Let $F$ be a $k$-dimensional manifold with a reference map $r_{F} \co F \ra L$ to a structured cell complex $L$. The symmetric signature
\begin{equation} \label{eqn:sym-signature-over-group-ring}
	\ssign_{\pi_{1} (L)} (F) \in L^{k} (\ZZ \pi_{1} (L))
\end{equation}
is represented by a symmetric chain complex
\[
(C,\varphi) \quad \textup{such that} \quad C = C(F) \quad \textup{and} \quad \varphi = r_{F}^{\%}\varphi_{F} [F]
\]
where $\varphi_{F}$ is the symmetric construction of \cite[Section 1]{Ranicki-II-(1980)} which is natural on the chain level. There exists a relative version \cite[Section 6]{Ranicki-II-(1980)} which is also natural on the chain level.

Let $(f,b) \co M \ra X$ be a degree one normal map from an $n$-dimensional manifold $M$ to an $n$-dimensional geometric Poincar\'e complex $X$ with a reference map $r_{X} \co X \ra K$ to a structured cell complex $K$. Let $U \co \Sigma^{p} X_{+} \ra \Sigma^{p} M_{+}$ be an associated Umkehr map. The quadratic signature
\begin{equation} \label{eqn:quad-signature-over-group-ring}
 \qsign_{\pi_{1} (K)} (f,b) \in L_{n} (\ZZ \pi_{1} (K))
\end{equation}
is represented by the quadratic chain complex
\[
(C,\psi) \quad \textup{such that} \quad C = \sC (f^{!}) \quad \textup{and} \quad \psi = (r_{X})_{\%} \psi_{U} [X]
\]
where $f^{!}$ denotes the algebraic Umkehr map associated to $U$ and $\psi_{U}$ is the quadratic construction of \cite[Section 1]{Ranicki-I-(1980)}. Again there exists a relative version \cite[Section 6]{Ranicki-II-(1980)}.

\begin{remark} \label{rem:quad-construction-is-dissection-friendly}
We note that while the quadratic construction $\psi_{U}$ in \cite[Section 1]{Ranicki-I-(1980)} and its relative version in \cite[Section 6]{Ranicki-II-(1980)} are described on the chain level, they are not known to be natural on the chain level (only in homology), see \cite[Chapter I, page 30]{Ranicki(1981)}. On the other hand we observe that a substitute property exists which is often sufficient for arguments on the chain level. As noted in the above sources, the only problem preventing us from the naturality property is that there is no natural inverse to the suspension chain homotopy equivalence $C(X) \ra \Sigma^{-p} C (\Sigma^{p} (X))$. Suppose that $X$ is ``dissected over'' $K$ as in \cite[Example 9.12]{Ranicki(1992)}. The suspension map respects the dissection and for each simplex $\sigma$ it restricts to the suspension map for $X[\sigma]$ and hence they individually have chain homotopy inverses. But now the argument from \cite[Proposition 4.7]{Ranicki(1992)} tells us that there exists an inverse for $X$ which respects the dissection. 	
\end{remark}

In the following discussion we will encounter several algebraic bordism categories as in the previous section. The underlying additive category $\AA$ will be the category $\ZZ$ of abelian groups and hence the notation $\ZZ_{\ast} (X)$ stands for the category obtained as in Section \ref{sec:chain-duality-on-cats-over-ball-cplxs-saf} from $\ZZ$.

Let $F$ be a $k$-dimensional manifold with a reference map $r_{F} \co F \ra L$ which is a homotopy equivalence to a structured cell complex $L$. (Such $r_{F}$ always exists for example using the fact that $F$ is an ENR. If $F$ is a triangulated manifold we can take $L$ to be the underlying simplicial complex of a triangulation with $r_{F}$ the identity. Also the map does not have to be a homotopy equivalence, but for applications we are mainly interested in that case, so we assume it.) Make $r_{F}$ transverse to dual cells so that each $F (\sigma) = r_{F}^{-1} (D(\sigma,L))$ is an $(k-|\sigma|)$-dimensional manifold with boundary. A choice of the fundamental class $[F] \in C_{k} (F)$ projects to a choice of the fundamental class for each $C_{k-|\sigma|} (F(\sigma),\del F(\sigma))$. By naturality of the symmetric construction on the chain level we obtain relative symmetric signatures for all $\sigma \in L$ which fit together to yield the symmetric signature over $L$
\begin{equation} \label{eqn:sym-sign-manifold-over-X}
	\ssign_{L} (F) \in L^{k} (\ZZ_{\ast} (L))
\end{equation}
represented by a symmetric chain complex over $L$
\[
(C,\varphi) \quad \textup{such that} \quad C(\sigma) = C (F(\sigma),\del F(\sigma)) \quad \textup{for} \quad \sigma \in L
\]
and $\varphi (\sigma)$ is the relative symmetric structure for $(F(\sigma),\del F(\sigma))$ obtained as in \cite[Section 6]{Ranicki-II-(1980)}. See \cite[Example 9.12]{Ranicki(1992)}, \cite[Definition 8.11]{Kuehl-Macko-Mole(2012)} for details.

Let $(F,\del F)$ be a $k$-dimensional manifold with boundary and with a reference map $r_{(F,\del F)} \co (F,\del F) \ra (L,\del L)$ which is a homotopy equivalence of pairs to a pair of structured cell complexes $(L,\del L)$. We obtain
\begin{equation} \label{eqn:sym-sign-manifold-over-X-rel-case}
	\ssign_{(L,\del L)} (F,\del F) \in L^{k} (\Lambda_{\del L} (L))
\end{equation}
again represented by a symmetric chain complex over $L$
\[
(C,\varphi) \quad \textup{such that} \quad C(\sigma) = C (D(\sigma),\del D(\sigma)) \quad \textup{for} \quad \sigma \in L,
\]
and $\varphi (\sigma)$ is the relative symmetric structure for $(F(\sigma),\del F(\sigma))$ obtained as in \cite[Section 6]{Ranicki-II-(1980)}. We note however, that the chain complex $(C,\varphi)$ here is in general only Poincar\'e over balls outside $\del L$.

Let $(f,b) \co M \ra X$ be a degree one normal map between two $n$-dimensional manifolds with a reference map $r_{X} \co X \ra K$ which is a homotopy equivalence to a structured cell complex $K$ and such that both $r_{X}$ and $r_{X} \circ f$ are transverse to the dual cells of $K$, so that we have a compatible collection of degree one normal maps $(f(\sigma),b(\sigma)) \co (M(\sigma),\del M(\sigma)) \ra (X(\sigma),\del X(\sigma))$. Let $U \co \Sigma^{p} X_{+} \ra \Sigma^{p} M_{+}$ be an associated Umkehr map. It projects to Umkehr maps for all $\sigma \in K$. Using the substitute for the naturality of the quadratic construction on the chain level described in Remark \ref{rem:quad-construction-is-dissection-friendly} we obtain the relative quadratic signatures for each $\sigma \in K$ which fit together to yield a quadratic signature over $K$
\[
\qsign_{K} (f,b) \in L_{n} (\ZZ_{\ast} (K))
\]
represented by the quadratic chain complex over $K$
\[
(C,\psi) \quad \textup{such that} \quad C(\sigma) = \sC (f^{!}(\sigma),\del f^{!}(\sigma)) \quad \textup{for} \quad \sigma \in K.
\]
Here $f (\sigma) = f|_{(r_{X} \circ f)^{-1} (D(\sigma,L))}$, the map $f^{!} (\sigma)$ denotes the algebraic Umkehr map obtained from the suitable projection of $U$ and $\psi (\sigma)$ is the resulting relative quadratic structure associated to the degree one normal map $(f(\sigma),b(\sigma))$ obtained as in \cite[Section 6]{Ranicki-II-(1980)}. For details see \cite[Example 9.13]{Ranicki(1992)}, \cite[Definition 8.14]{Kuehl-Macko-Mole(2012)}.

Let $(f,b) \co (M,\del M) \ra (X,\del X)$ be a degree one normal map between two $n$-dimensional manifolds with boundary with a reference map $r_{(X,\del X)} \co (X,\del X) \ra (K,\del K)$ which is a homotopy equivalence of pairs to a pair of structured cell complexes and such that both $r_{(X,\del X)}$ and $r_{(X,\del X)} \circ f$ are transverse to the dual cells of $K$. We obtain the relative quadratic signatures for each $\sigma \in K$ which fit together to yield a quadratic signature over $(K,\del K)$
\[
\qsign_{(K,\del K)} (f,b) \in L_{n} (\Lambda_{\del K} (K))
\]
represented by the quadratic chain complex over $K$
\[
(C,\psi) \quad \textup{such that} \quad C(\sigma) = \sC (f^{!}(\sigma),\del f^{!}(\sigma)) \quad \textup{for} \quad \sigma \in K.
\]
Again here $f (\sigma) = f|_{(r_{X} \circ f)^{-1} (D(\sigma,L))}$, the map $f^{!} (\sigma)$ denotes the algebraic Umkehr map obtained from the suitable projection of $U$ and $\psi (\sigma)$ is the resulting relative quadratic structure associated to the degree one normal map $(f(\sigma),b(\sigma))$ obtained as in \cite[Section 6]{Ranicki-II-(1980)}. We note however, that the chain complex $(C,\psi)$ here is in general only Poincar\'e over balls outside $\del K$.

\begin{remark} \label{rem:global-version}
	We note that without much effort the signatures we presented here can also be generalised to the case when $X$ is a geometric Poincar\'e complex and not necessarily a manifold. However, one should observe that the main difference in this situation is that when considering signatures over $X$ one does not obtain elements of the $L$-theory of the category $\Lambda_{\ast} (X)$, only in the $L$-theory of the category $\Lambda (X)$, see Remark \ref{rem:notation-Lambda}, that means that one obtains quadratic chain complexes dissected over $X$, but they will not necessarily be locally Poincar\'e, they might only be Poincar\'e globally (essentially since $X$ might not be locally Poincar\'e it is only globally Poincar\'e by definition.) See \cite[Section 14]{Kuehl-Macko-Mole(2012)} for more details about signatures in these situations.
\end{remark}


\section{Products} \label{sec:products}


For symmetric and quadratic chain complexes over rings we have products and for symmetric and quadratic signatures over group rings we have product formulae as follows.

Recall from \cite[Section 8]{Ranicki-I-(1980)} that for rings $R$ and $S$ with involution we have natural products
\begin{align}
	\begin{split}
	\label{eqn:products-on-L-groups-of-rings}
	\blank \otimes \blank & \co L^{k} (R) \otimes L^{n} (S) \ra L^{n+k} (R \otimes S) \\
	\blank \otimes \blank & \co L^{k} (R) \otimes L_{n} (S) \ra L_{n+k} (R \otimes S)		
	\end{split}
\end{align}
such that in both cases the underlying chain complex of the product is the usual tensor product of the underlying chain complexes
\begin{align}
	\begin{split}
	\label{eqn:products-on-L-groups-of-rings-formulas}
 (C,\varphi_{C}) \otimes (D,\varphi_{D}) & = (C \otimes D,\varphi_{C} \otimes \varphi_{D}) \\
 (C,\varphi_{C}) \otimes (D,\psi_{D}) & = (C \otimes D,\varphi_{C} \otimes \psi_{D})
	\end{split}
\end{align}
and where $\varphi_{C} \otimes \varphi_{D}$ and $\varphi_{C} \otimes \psi_{D}$ are defined via a diagonal approximation of $W$ as in \cite[page 174]{Ranicki-I-(1980)}. Note that these definitions are again on the chain level.

Recall from \cite[Section 8]{Ranicki-II-(1980)} that for $r_{F} \co F \ra B \pi$, $r_{F'} \co F' \ra B \pi'$ and $(f,b) \co M \ra X$ with $r_{X} \co X \ra B \pi'$ these products satisfy product formulae
\begin{align}
	\begin{split}
		\label{eqn:product-formulas-for-signatures-over-group-rings}
			\ssign_{\ZZ[\pi \times \pi']} (F \times F') & = \ssign_{\ZZ[\pi]} (F) \otimes \ssign_{\ZZ[\pi']} (F') \\
			\qsign_{\ZZ[\pi \times \pi']} (\id_{F} \times f, \id_{\nu_F} \times b) & = \ssign_{\ZZ[\pi]} (F) \otimes \qsign_{\ZZ[\pi']} (f,b).
		\end{split}
\end{align}
In the symmetric case the proof uses that the acyclic models method induces a natural chain homotopy equivalence $C(F \times F') \ra C(F) \otimes C(F')$ inverse to the cross product. The cross product produces a fundamental class cycle of $F \times F'$ from the fundamental class cycles of $F$ and $F'$ and the same argument as the one which shows the Cartan formula for Steenrod squares identifies the two symmetric structures. In the quadratic case the proof uses in addition that if a stable map $U \co \Sigma^{p} X_{+} \ra \Sigma^{p} M_{+}$ is a geometric Umkehr map for $(f,b)$ then $\id \wedge U \co \Sigma^{p} (F \times X)_{+} \ra \Sigma^{p} (F \times M)_{+}$ is a geometric Umkehr map for $(id \times f,\id \times b)$ and that the symmetric construction commutes naturally with suspensions. As before note that these arguments work in the relative case and on the chain level in the symmetric case and in the quadratic case we can use Remark \ref{rem:quad-construction-is-dissection-friendly}.

The relative versions are straightforward. We emphasise that the above discussion says that the products are constructed naturally on the chain level in the symmetric case. In the quadratic case almost the same is true, use the substitute for naturality discussed in the previous section which allows us to say that we get the multiplicativity already on the chain level. The fact that the formulae in \cite{Ranicki-II-(1980)} are stated in terms of $L$-groups is caused by the observation that the fundamental classes are only well-defined up to homology on the chain level.

Our aim in this section is to refine these products to products
\begin{align}
	\begin{split} \label{eqn:products-on-L-groups-over-X}
	\blank \otimes \blank & \co L^{k} (\ZZ_{\ast} (L)) \otimes L^{n} (\ZZ_{\ast} (K)) \ra L^{n+k} (\ZZ_{\ast} (L \times K)) \\
	\blank \otimes \blank & \co L^{k} (\ZZ_{\ast} (L)) \otimes L_{n} (\ZZ_{\ast} (K)) \ra L_{n+k} (\ZZ_{\ast} (L \times K))
	\end{split}
\end{align}
which for $r_{F} \co F \ra L$, $r_{F'} \co F' \ra K$ and $(f,b) \co M \ra X$ with $r_{X} \co X \ra K$  satisfy
\begin{align}
	\begin{split} \label{eqn:product-formula-for-signatures-over-X}
	\ssign_{L \times K} (F \times F') & = \ssign_{L} (F) \otimes \ssign_{K} (F'), \\
	\qsign_{L \times K} (\id_{F} \times f, \id_{\nu_{F}} \times b) & = \ssign_{L} (F) \otimes \qsign_{K} (f,b).
	\end{split}
\end{align}

We also obtain relative versions which we discuss in more detail later.

Recall that an element in $L^{k} (\ZZ_{\ast} (L))$ is represented by a $k$-dimensional symmetric chain complex $(C,\varphi_{C})$ in $\ZZ_{\ast} (L)$. This means that it is an assignment $\sigma \mapsto (C(\sigma),\varphi(\sigma))$ where the value is an appropriate $(k-|\sigma|)$-dimensional symmetric Poincar\'e ad. Let similarly $(D,\varphi_{D})$ represent an element in $L^{n} (\ZZ_{\ast} (K))$ and $(D,\psi_{D})$ represent an element in $L_{n} (\ZZ_{\ast} (K))$. Define the products \eqref{eqn:products-on-L-groups-over-X} by the formulae
\begin{align}
	\begin{split}
		((C,\varphi_{C}) , (D,\varphi_{D})) & \mapsto (\sigma \times \tau \mapsto (C(\sigma) \otimes D(\tau),\varphi_{C} (\sigma) \otimes \varphi_D (\tau))) \\
		((C,\varphi_{C}) , (D,\psi_{D})) & \mapsto (\sigma \times \tau \mapsto (C(\sigma) \otimes D(\tau),\varphi_{C} (\sigma) \otimes \psi_D (\tau))).
	\end{split}
\end{align}
where the products $\otimes$ on the right hand sides are the chain level products in \eqref{eqn:products-on-L-groups-of-rings-formulas}. The products here are well defined, since products of Poincar\'e ads are Poincar\'e ads of correct dimensions, see \eqref{eqn:cycle-in-L-hlgy}.

With all the work done so far, the construction is straightforward due to the fact that the ball complex structure on a product of ball complexes is given by products of balls and that the dual cells are products of dual cells and that the above product formulae actually come from product formulae on the chain level.

Let $r_{F} \co F \ra L$ and $r_{F'} \co F' \ra K$ be transverse to dual cells and consider $\ssign_{L} (F)$ and $\ssign_{K} (F')$. These are represented by symmetric chain complexes over $L$ and $K$ respectively with underlying chain complexes such that for $\sigma \in L$ we have that $C (\sigma) = C(D(\sigma,L),\del D (\sigma,L))$ and for $\tau \in K$ we have that $D (\tau) = C(D(\tau,K),\del D (\tau,K))$.

Now consider $r_{F} \times r_{F'} \co F \times F' \ra L \times K$. This map is already transverse to the dual cells $D(\sigma \times \tau, L \times K) = D(\sigma, L) \times D(\tau, K)$. Moreover we have that $r_{F} \times r_{F'}$ restricted to $(r_{F} \times r_{F'})^{-1} (D(\sigma \times \tau, L \times K))$ equals
\[
r_{F}| \times r_{F'}| \co F(\sigma) \times F'(\tau) \ra D(\sigma,L) \times D(\tau,K).
\]
Hence from the multiplicativity of the relative products on the chain level we obtain the desired first formula in \eqref{eqn:product-formula-for-signatures-over-X}.

For the quadratic case let $(f,b) \co M \ra X$ with $r_{X} \co X \ra K$ and $r_{F} \co F \ra L$ be transverse to the dual cells. Then both $r_{F} \times r_{X}$ and $(r_{F} \times r_{X}) \circ f$ are already transverse to the dual cells $D(\sigma \times \tau,L \times K) = D(\sigma, L) \times D(\tau, K)$ and the restriction maps are the product maps
\[
(\id_{F(\sigma)} \times f(\tau),\id_{\nu_{F(\sigma)}} \times b(\tau)) \co F (\sigma) \times M(\tau) \ra F(\sigma) \times X (\tau).
\]
Hence from the multiplicativity of the relative products on the chain level we obtain the desired second formula in \eqref{eqn:product-formula-for-signatures-over-X}.

There are many relative versions of the products and product formulae above. Note that they are obtained exactly as the absolute versions, except we observe that the symmetric and quadratic complexes which are involved are only required to be Poincar\'e outside the boundaries. For simplicity we only discuss one of them which is also of interest for applications. Let $(F,\del F)$ be a $k$-dimensional manifold with boundary and with a reference map $r_{(F,\del F)} \co (F,\del F) \ra (L,\del L)$ which is a homotopy equivalence of pairs to a pair of structured cell complexes $(L,\del L)$. In the previous section we constructed
\[
\ssign_{L,\del L} (F,\del F) \in L^{k} (\Lambda_{\del L} (L)).
\]
We have the relative product
\begin{equation} \label{eqn:products-on-L-groups-over-X-relative-case}
\blank \otimes \blank \co L^{k} (\Lambda_{\del L} (L)) \otimes L_{n} (\ZZ_{\ast} (K)) \ra L_{n+k} (\Lambda_{\del L \times K} (L \times K))	
\end{equation}
and again from the multiplicativity of the relative products on the chain level we obtain the product formula
\begin{equation} \label{eqn:product-formula-for-signatures-over-X-relative-case}
	\qsign_{(L \times K, \del L \times K)} (\id_{F} \times f, \id_{\nu_{F}} \times b) = \ssign_{(L,\del L)} (F,\del F) \otimes \qsign_{K} (f,b).
\end{equation}

Let us now consider the special case when $(F,\del F) = (D^{k},S^{k-1})$ and we take $r_{D^{k}}$ to be the identity and think of $D^{k} = [0,1]^{k}$ as a ball complex. Also recall that the $L$-groups over complexes are isomorphic to the homology groups with coefficients in the appropriate spectra. We observe that the product with
\[
\ssign_{(D^{k},S^{k-1})} (D^{k},S^{k-1}) \in L^{k} (\Lambda_{S^{k-1}} (D^{k})) \cong H_{k} (D^{k},S^{k-1};\bL^{\bullet})
\]
commutes with the suspension isomorphism
\[
H_{n} (K;\bL_{\bullet}) \ra H_{n+k} (D^{k} \times K,S^{k-1} \times K;\bL_{\bullet})
\]
as follows.

We observe from the definitions that the Poincar\'e duality isomorphism in symmetric $L$-theory sends
\begin{align*}
	L^{k} (\Lambda_{S^{k-1}} (D^{k})) \cong H_{k} (D^{k},S^{k-1};\bL^{\bullet}) & \cong H^{0} (D^{k};\bL^{\bullet}) \cong \pi_{0} \bL^{\bullet} \\
	\ssign_{(D^{k},S^{k-1})} (D^{k},S^{k-1}) & \mapsto 1
\end{align*}
and hence the symmetric signature $\ssign_{(D^{k},S^{k-1})} (D^{k},S^{k-1})$ is in fact the fundamental class of $(D^{k},S^{k-1})$ in the homology groups with coefficients in the symmetric $L$-theory spectrum $\bL^{\bullet}$.
Now the general theory of products in ring spectra and module spectra over ring spectra as in \cite{Adams(1974)} tells us that the product with the fundamental class of $(D^{k},S^{k-1})$ gives the suspension isomorphism.

\begin{remark}
	We note that without much effort the products and the product formulae we obtained here can be extended to a situation when $K$ and $L$ are geometric Poincar\'e complexes and not necessarily manifolds. The main difference is that, as noted above in Remark \ref{rem:global-version}, the signature lands in the $L$-theories of algebraic bordism categories of chain complexes which are only globally Poincar\'e. We leave the details for  the reader.	
\end{remark}

\small
\bibliography{tso-all}  
\bibliographystyle{alpha}

\end{document}